\newtheorem{thm}{Theorem}[section]
\newtheorem{lem}[thm]{Lemma}
\theoremstyle{definition}
\theoremstyle{remark} 
\newtheorem{rem}[thm]{Remark}
\let\abs=\envert
\let\Lm=\Lambda
\let\vph=\varphi
\newcommand{\floor}[1]{\left\lfloor#1\right\rfloor}
\newcommand{\acr}{\newline\indent}
\begin{document}

\title[An analog of perfect numbers]{An analog of perfect numbers involving the unitary totient function}
\author[Tomohiro Yamada]{Tomohiro Yamada*}
\address{\llap{*\,}Center for Japanese language and culture\acr
                   Osaka University\acr
                   562-8558\acr
                   8-1-1, Aomatanihigashi, Minoo, Osaka\acr
                   JAPAN}
\email{tyamada1093@gmail.com}

\subjclass[2010]{Primary 11A05, 11N36; Secondary 11A25, 11A51.}
\keywords{Unitary divisor; totient; Lehmer's problem; sieve methods; arithmetic functions}

\begin{abstract}
We shall give some results for an integer divisible by its unitary totient.
\end{abstract}

\maketitle

\section{Introduction}\label{intro}

In 1932, Lehmer \cite{Leh1} asked whether there exists any composite numbers $n$ such that 
the Euler totient $\vph(N)$ divides $N-1$ or not.
Obviously, if $N$ is prime, then $\vph(N)=N-1$ and vice versa.
Lehmer's problem is still unsolved.

On the other hand, it is easy to show that $\vph(N)$ divides $N$
if and only if $N$ is of the form $2^m 3^n$ (see, for example, p.p. 196--197 of \cite{Sie}).
Such an integer can be considered as a totient version of a multiperfect number,
i.e., an integer dividing its sum of divisors.
However, it is not known whether there exists any odd multiperfect numbers,
nor whether there exist infinitely many even multiperfect numbers.

An divisor $d$ of $N$ is called a unitary divisors of $N$ if $\gcd(d, N/d)=1$,
denoted by $d\mid\mid N$.
Subbarao \cite{Sub} considered the problem analogous to Lehmer's problem
involving $\vph^*$, the unitary analogue of $\vph$.
So $\vph^*$ is defined by
\begin{equation}
\vph^*(N)=\prod_{p^e\mid\mid N}(p^e-1),
\end{equation}
where the product is over all prime powers unitarily dividing $N$.
We call the value $\vph^*(N)$ the {\it unitary totient} of an integer $N$.
Subbarao conjectured that $\vph^*(N)$ divides $N-1$ if and only if $N$ is a prime power.
This conjecture is still unsolved.
However, Subbarao and Siva Rama Prasad \cite{SS}
showed that $N$ must have at least eleven distinct prime factors if $N$ is not a prime power
and $\vph^*(N)$ divides $N-1$.

Now the problem naturally arises when $\vph^*(N)$ divides $N$.
We can easily find some instances (see also \url{https://oeis.org/A319481}):
\begin{equation}\label{eq01}
\begin{split}
N_1 & =1, N_2=2, N_3=2\cdot 3, N_4=2^2\cdot 3,\\
N_5 & =2^3\cdot 3\cdot 7,\\
N_6 & =2^4\cdot 3\cdot 5,\\
N_7 & =2^5\cdot 3\cdot 5\cdot 31,\\
N_8 & =2^8\cdot 3\cdot 5\cdot 17,\\
N_9 & =2^{11}\cdot 3\cdot 5\cdot 11^2\cdot 23\cdot 89,\\
N_{10} & =2^{16}\cdot 3\cdot 5\cdot 17\cdot 257,\\
N_{11} & =2^{17}\cdot 3\cdot 5\cdot 17\cdot 257\cdot 131071,\\
N_{12} & =2^{32}\cdot 3\cdot 5\cdot 17\cdot 257\cdot 65537.
\end{split}
\end{equation}
All of these examples satisfy $N=2\vph^*(N)$ except $N=1=\vph^*(1)$ and $N=6=3\vph^*(6)$.

Such integers were implicitly referred in \cite{FJ} as possible orders of groups with
perfect order subsets, groups $G$ with the number of elements in each order subset dividing $\abs{G}$.
Integers equal to twice of its unitary totient had been introduced in the OEIS in 1999
by Yasutoshi Kohmoto\\ \url{https://oeis.org/A030163}.

At the JANT meetings held at Kyoto in the spring of 2009, the author showed some basic results
for integers with this property.
Here the author showed that $N$ must be divisible by one of the first five (i.e. all known)
Fermat primes and have a prime power factor $>10^8$.
Later Ford, Konyagin and Florian Luca \cite{FKL} studied properties of such integers
and showed that such integers $N$ must be divisible by $3$ and $N/\vph^*(N)\leq 85$.

This paper is a revised version of the unpublished note presented at the JANT meetings
and contains some new results which the author recently proved.

We would like to state main results.

\begin{thm}\label{thm2}
If $N$ is an integer divisible by $\vph^*(N)$ other than given in \eqref{eq01},
then $\omega(N)\geq 8$, where $\omega(N)$ denotes the number of distinct prime factors of $N$.
\end{thm}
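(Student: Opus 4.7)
The plan is to carry out a case analysis on $k=\omega(N)\in\{0,1,\ldots,7\}$, showing that in each case $N$ must coincide with one of the listed $N_i$. The basic structural tool is the following: if $p^e\mid\mid N$, then $\gcd(p^e-1,p)=1$, and since $\vph^*(N)\mid N$, the factor $p^e-1$ must divide $N/p^e$. Consequently every prime divisor of $p^e-1$ is already a prime divisor of $N$, so the set of primes dividing $N$ is closed under the operation sending each prime power $p^e\mid\mid N$ to the primes dividing $p^e-1$. Combined with the divisibility $3\mid N$ and the bound $N/\vph^*(N)\leq 85$ established in \cite{FKL}, this restricts the possible prime factorizations to a finite and tractable collection.

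For $k\leq 2$, a direct check using $2\mid N$ (immediate whenever $N>2$ has any odd prime factor) together with the closure property recovers $N\in\{1,2,6,12\}$. For each $k$ with $3\leq k\leq 7$, I would enumerate candidate prime tuples $2=p_1<3=p_2<p_3<\cdots<p_k$ built up recursively from $\{2,3\}$: a new prime $p$ is admitted to the support only if it arises as a divisor of $q^a-1$ for some $q^a\mid\mid N$ with $q$ already in the support. For each such tuple the exponents $a_i$ are forced into a short list by the twin requirements that each $p_i^{a_i}-1$ factor over $\{p_j:j\neq i\}$ and that $\prod_{i=1}^{k} p_i^{a_i}/(p_i^{a_i}-1)\leq 85$. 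The ``Fermat tower'' $3\to 5\to 17\to 257\to 65537$ is the cheapest way to enlarge the support, since a Fermat prime $F_n$ contributes only powers of $2$ to $\vph^*(N)$; any non-Fermat odd prime drags in additional members and accelerates the consumption of the budget $k\leq 7$.

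The main obstacle will be keeping the combinatorial search tractable for $k=6$ and $k=7$. To control it, one rewrites $N/\vph^*(N)=\prod_{p^e\mid\mid N}(1+1/(p^e-1))\leq 85$, which limits both how small the primes can collectively be and how large any individual exponent can grow; one then prunes the search tree at any partial assignment whose contribution to $\prod p_i^{a_i}/(p_i^{a_i}-1)$ can no longer be extended to an integer at most $85$. A careful such enumeration should yield only the $N_i$ listed in \eqref{eq01} for $k\leq 6$ and no solutions at all for $k=7$, completing the proof.
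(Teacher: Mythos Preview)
Your plan has the right ingredients (closure of the prime support under $p^e\mapsto$ primes of $p^e-1$, integrality of $h(N)=N/\vph^*(N)$), but it misses the one reduction that makes the whole case analysis feasible, and as written the enumeration is not finite.

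The paper's first move is to observe that if $h(N)\geq 3$ then already
\[
h(N)<h(2^5\cdot 3\cdot 5\cdot 7\cdot 11\cdot 13\cdot 17)<3
\]
forces $\omega(N)\geq 8$. Hence for $\omega(N)\leq 7$ one may assume $h(N)=2$ exactly. You instead carry the bound $h(N)\leq 85$ from \cite{FKL}; this throws away the \emph{lower} bound $h(N)\geq 2$, which is what actually bounds the primes. In the paper the inequality
\[
\left(\frac{p_{k+1}}{p_{k+1}-1}\right)^{r-k}\geq \frac{h(N)}{h(M_k)}=\frac{2}{h(M_k)}
\]
gives an explicit upper bound for $p_{k+1}$ once $M_k=2^{e}P_1\cdots P_k$ is fixed, and the companion constraint $(p_{k+1}-1)\mid M_k/\gcd(M_k,\vph^*(M_k))$ cuts the candidates to a handful. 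Your recursive rule ``admit $p$ if $p\mid q^a-1$ for some $q$ already present'' does not bound $a$, so at the very first step (with $q=2$) it offers infinitely many primes; nothing in $h(N)\leq 85$ alone prevents this.

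The second thing your outline lacks is the mechanism the paper uses to pin down exponents. It is not enough that $p_i^{a_i}-1$ factor over the remaining primes: one must also track \emph{multiplicities}. The paper proves $3\mid\mid N$ and $5\mid\mid N$ early, and then kills most candidate exponents by chains such as $5^{7}\rightarrow 19531\rightarrow 3^{2}$, which would force $3^{2}\mid N$; similarly the precise power of $2$ in $N$ is constrained by $2\prod_i(p_i-1)\mid N$. These two-sided constraints (upper from $h<2$ for wrong choices, lower from $h\geq 2$, and exact divisibility of $3$ and $5$) are what reduce the search to the short sequence of lemmas $P_1=3$, $P_2=5$, $7\nmid N$, $P_3=17$, $P_4=257$, etc. Your proposal gestures at a pruning criterion but never shows the tree is finite, let alone carries out any branch; as it stands it is a plan, not a proof.
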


\begin{thm}\label{thm3}
If $N$ is none of the twelve instances given above, then $N$ must be divisible by an odd prime factor
at least $10^5$ and an odd prime power at least $10^8$.
\end{thm}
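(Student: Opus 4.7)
My plan is to combine the known structural constraints on $N$ with a tightly pruned case analysis, feasible by hand-plus-computer. The ingredients I take as given are: by Ford--Konyagin--Luca \cite{FKL}, $k := N/\vph^*(N)$ is a positive integer with $k \leq 85$ and $3 \mid N$; and by Theorem~\ref{thm2}, $\omega(N) \geq 8$. Writing $N = 2^{e_0}\prod_{i=1}^{t} p_i^{e_i}$ with odd primes $p_1 < \cdots < p_t$ (so $t \geq 7$), the defining identity reads
\[
k \,=\, \prod_{i=0}^{t} \frac{p_i^{e_i}}{p_i^{e_i}-1}, \qquad p_0 := 2,
\]
and the crucial auxiliary fact is the \emph{closure property}: since $\vph^*(N)\mid N$, every prime dividing any $p_i^{e_i}-1$ lies in $\{2, p_1, \ldots, p_t\}$. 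Matching $2$-adic valuations separately yields the accounting inequality $e_0 \geq \sum_{i=1}^{t} v_2(p_i^{e_i}-1)$.

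For the first assertion I would argue by contradiction: suppose $p_t < 10^5$. I would then build the set of odd prime factors inductively, starting from $p_1 = 3$ and adjoining primes in increasing order. The closure property severely restricts the candidates at each step, since for some $e$ with $p^e \mid\mid N$ the integer $p^e - 1$ must factor entirely over the current prime set. Given an admissible set $\{p_1,\ldots,p_t\}$, the bound $k \leq 85$ tightly restricts the exponent vectors $(e_0, e_1, \ldots, e_t)$ via the telescoping product displayed above, leaving only finitely many candidates for $N$. A direct check then rules out every candidate not already in \eqref{eq01}. For the second assertion I would adapt the same scheme under the weaker hypothesis that $p_i^{e_i} < 10^8$ for every $i \geq 1$. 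This still bounds each odd prime by $10^8$ and each exponent by $\floor{8\log 10/\log p_i}$, so the same closure-plus-$k$-product enumeration terminates, merely over a larger (but still finite) search space.

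The main obstacle is keeping the enumeration tractable. A naive count of prime subsets of $\{3,5,\ldots,10^5\}$ is astronomical; the closure property is the decisive pruning tool, because adjoining a prime $p$ forces a cascade of large prime factors (arising from $p-1,\ p^2-1,\ \ldots$) to already be present, dramatically thinning the admissible sets. Balancing this against the inequality $e_0 \geq \sum_{i=1}^{t} v_2(p_i^{e_i}-1)$, which dictates how large the $2$-part of $N$ must be once the odd part is fixed, and against the integrality of $k$, is the delicate technical point; optimizing these constraints jointly, especially for the $10^8$ bound where the candidate primes are numerous, is the technical heart of the argument.
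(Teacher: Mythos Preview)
Your proposal and the paper share the same underlying engine---the closure property that every prime factor of each $p_i^{e_i}-1$ must already divide $N$, together with a computer-assisted enumeration---but the organisation is genuinely different. The paper does \emph{not} build the odd prime set upward from $3$; instead it pivots on the $2$-exponent $e_0$. Since $2^{e_0}-1\mid N$, the hypothesis $P(N)<10^5$ (respectively, that every odd prime power unitary divisor is $<10^8$) forces $P(2^{e_0}-1)<10^5$ (resp.\ $<10^8$), and the paper first proves a Goto--Ohno style lemma listing \emph{all} such $e_0$ explicitly (there are only a few dozen). For each admissible $e_0$ the factorisation of $2^{e_0}-1$ seeds a forward-propagation algorithm: each newly forced prime $p$ is assigned every exponent $e$ for which $P(p^e-1)$ stays below the threshold (these exponents are also tabulated in advance, for all $p<10^5$), and one iterates until a contradiction or a solution. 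The inputs you invoke---$k\le 85$ from \cite{FKL} and $\omega(N)\ge 8$ from Theorem~\ref{thm2}---are not used at all.

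The advantage of the paper's route is that the search tree has a single finite root (the list of $e_0$) and at every node the branching is controlled by a precomputed table of ``$B$-smooth'' values of $p^e-1$, so termination is automatic. In your scheme the delicate point you flag is real: for the first assertion you give no bound on the exponents $e_i$ (the bound $p_i^{e_i}<10^8$ only applies to the second), and the constraint $k\le 85$ on the product $\prod p_i^{e_i}/(p_i^{e_i}-1)$ does not by itself restrict individual exponents, since each factor tends to $1$. You would still need something like the paper's lemma---that $P(p^e-1)<10^5$ forces $e$ into a short explicit list for each $p<10^5$---to make the enumeration finite, at which point the two approaches essentially converge.
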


\begin{thm}\label{thm5}
There exist only finitely many integers $N$ divisible by $\vph^*(N)$
which are products of consecutive primes.
\end{thm}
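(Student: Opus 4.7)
The plan is to reduce the statement to an elementary comparison of the $2$-adic valuations of $N$ and $\vph^*(N)$. Write $N = p_a p_{a+1} \cdots p_b$, where $p_i$ denotes the $i$-th prime in increasing order. Because such an $N$ is squarefree, each of its prime divisors is automatically a unitary divisor, and so
\begin{equation*}
\vph^*(N) = \prod_{i=a}^{b}(p_i - 1).
\end{equation*}

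First I would dispose of the range $p_a \ge 3$: then $N$ is odd while every factor $p_i-1$ on the right is even, forcing $2^{b-a+1}\mid\vph^*(N)$, which contradicts $\vph^*(N)\mid N$ as soon as the product is nonempty. Hence we may restrict to $p_a = 2$, so that $v_2(N) = 1$.

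The main step is then the observation that for every $i \ge 2$ the prime $p_i$ is odd, so $v_2(p_i - 1) \ge 1$, and therefore
\begin{equation*}
v_2(\vph^*(N)) \;\ge\; b - 1.
\end{equation*}
Combined with $v_2(\vph^*(N)) \le v_2(N) = 1$, this forces $b \le 2$, leaving only the candidates $N = 1$, $N = 2$ and $N = 6$, each of which is easily verified to satisfy $\vph^*(N) \mid N$.

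I do not expect any real obstacle: the argument is essentially a one-line $2$-adic valuation check and in fact yields, beyond the qualitative finiteness claim, the explicit bound $N \le 6$ and the complete list of solutions. The only subtlety is being precise about what "product of consecutive primes" means; the above treats $N$ as squarefree, which is the natural reading and makes the $v_2$ argument self-contained without appeal to the Ford--Konyagin--Luca estimate $N/\vph^*(N)\le 85$.
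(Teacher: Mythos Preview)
Your argument is correct for the squarefree interpretation, but that is not the statement the paper is proving. In the paper's usage, ``$N$ is a product of consecutive primes'' means that the \emph{set of prime divisors} of $N$ is an initial segment $\{2,3,5,\ldots,p_k\}$ of the primes; the exponents are unrestricted. This is clear from the opening line of the paper's proof (``Assume that $\vph^*(N)$ divides $N$ and $N$ is divisible by exactly all primes below $x$'') and from the subsequent use of $q^f\mid\mid N$ with arbitrary $f$, as well as from the fact that the paper deploys a Bombieri--Vinogradov-type estimate and sieve bounds, which would be absurd if a two-line $2$-adic check sufficed.

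Under the intended reading your $v_2$ argument collapses immediately: nothing prevents $2^e\mid\mid N$ with $e$ large, and indeed Lemma~\ref{lm11}~d) shows that this is exactly what must happen once many odd primes divide $N$. The genuine difficulty is to rule out that, as $x\to\infty$, one can keep choosing exponents $e_p$ on each prime $p\le x$ so that $\prod_{p\le x} p^{e_p}/(p^{e_p}-1)$ lands on an integer. The paper's proof shows that the divisibility $\vph^*(N)\mid N$ forces the exponent of each prime $q\le x$ in $N$ to be at least $\pi(x;q,1)$, and then uses analytic estimates to show that $\sum_q q^{-\pi(x;q,1)}$ is too small for $h(N)\ge 2$ once $x$ is large; finiteness of $N$ for bounded $x$ then follows from Lemma~\ref{lm23}. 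Your proposal does not engage with any of this, so as a proof of Theorem~\ref{thm5} it has a genuine gap.
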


Theorems \ref{thm2} and \ref{thm3} of our results are proved by elementary means
but require a fair amount of computation.
Theorem \ref{thm5} are proved using sieve methods combined with
the analytic theory of the distribution of primes in arithmetic progressions.

\section{Basic properties}
From now on, we assume that $N$ is an integer satisfying $\varphi^*(N)\mid N$.

We set $h(n)=n/\vph^*(n)$ for a positive integer $n$.
Hence, $N$ is $\vph^*$-multiperfect if and only if $h(N)$ is an integer.
We factor $N=2^e p_1^{e_1} p_2^{e_2} \cdots p_r^{e_r}$ and put $P_i=p_i^{e_i}$,
where $p_1<p_2<\cdots<p_r$ are the odd prime factors of $N$.
We write $M_k=2^e P_1 P_2 \cdots P_k$ for $k=1, 2, \ldots, r$ and $M_0=2^e$.
Moreover, let $Q_1<Q_2<\cdots<Q_m$ be the odd prime power factors of $N$.
It is easy to see that $Q_k\geq p_k$ for every $k$.
Indeed, among $Q_1, Q_2, \ldots, Q_k$ is some $Q_j=p_t^{e_t}$ with $t\geq k$.
We note that $P_i$ and $Q_i$ are not necessarily equal.
For example, if $N=2^e\cdot 3^2\cdot 5\cdot 11\cdots$, then $P_1=3^2$ and $P_2=5$
while $Q_1=5$ and $Q_2=3^2$.

\begin{lem}\label{lm11}
We have the following properties:
\begin{itemize}
\item[a)] If $N=2^e$, then $e=1$.
\item[b)] For each $i=1, 2, \ldots, r$, $p_i^{e_i}-1$ divides $N$.
In particular, $p_i-1$ must divide $N$.
\item[c)] If $m$ is a squarefree integer dividing $N$, then $\vph(m)=\vph^*(m)$ must divide $N$.
\item[d)] If $2^e\mid\mid N$, then $N$ has at most $e$ distinct odd prime factors.
More exactly, if $2^f$ divides $h(N)$ and $2^{f_i}\mid (p_i-1)$ for each odd prime factor
$p_i (i=1, 2, \ldots, r)$ of $N$, then we have $f+\sum_{i=1}^r f_i\leq e$.
\end{itemize}
\end{lem}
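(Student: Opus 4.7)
The plan is to handle the four parts in order; each reduces to a short computation using the multiplicativity of $\vph^*$ and elementary divisibility facts, with (c) being the only part that requires a small trick.

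Part (a) is immediate from coprimality: $\vph^*(2^e) = 2^e - 1$ is coprime to $2^e$, so $\vph^*(N)\mid N$ forces $2^e-1=1$, whence $e=1$. For (b), I would invoke multiplicativity: since $p_i^{e_i}\mid\mid N$, we may factor $\vph^*(N) = (p_i^{e_i}-1)\,\vph^*(N/p_i^{e_i})$, so $p_i^{e_i}-1$ divides $\vph^*(N)$ and hence $N$. The ``in particular'' statement then follows from the algebraic identity $p_i^{e_i}-1 = (p_i-1)(p_i^{e_i-1}+\cdots+1)$, which gives $p_i-1\mid p_i^{e_i}-1\mid N$.

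Part (c) is where one has to be careful. The naive move of multiplying together the separate conclusions $p-1\mid N$ from (b) does not yield $\vph(m)\mid N$. Instead I would pass through the ``unitary closure'' $m^* := \prod_{p\mid m} p^{v_p(N)}$, which \emph{is} a unitary divisor of $N$. Multiplicativity of $\vph^*$ then gives $\vph^*(m^*)\mid \vph^*(N)\mid N$; and since $(p-1)\mid(p^{v_p(N)}-1)$ termwise, multiplying across primes $p\mid m$ yields $\vph(m) = \prod_{p\mid m}(p-1)$ divides $\prod_{p\mid m}(p^{v_p(N)}-1) = \vph^*(m^*)$, hence divides $N$.

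For (d) I would carry out a $2$-adic valuation computation. Writing $v_2$ for the $2$-adic valuation, we have $v_2(N) = e$ since $2^e\mid\mid N$, and by multiplicativity together with the fact that $2^e-1$ is odd,
\[
v_2(\vph^*(N)) = \sum_{i=1}^r v_2(p_i^{e_i}-1) \;\geq\; \sum_{i=1}^r v_2(p_i-1) = \sum_{i=1}^r f_i,
\]
where the inequality uses $(p_i-1)\mid(p_i^{e_i}-1)$. Since $h(N)=N/\vph^*(N)$ is an integer with $v_2(h(N)) = f$, we get $f = e - v_2(\vph^*(N))$, which rearranges to $f+\sum_i f_i\leq e$. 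The coarser bound $r\leq e$ then drops out because each $f_i\geq 1$ (as $p_i$ is odd, $p_i-1$ is even). The only conceptual obstacle in the whole lemma is the routing through $m^*$ in (c); everything else is direct bookkeeping.
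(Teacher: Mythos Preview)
Your proof is correct and follows essentially the same route as the paper: in particular, your ``unitary closure'' $m^*=\prod_{p\mid m}p^{v_p(N)}$ is exactly the paper's $\prod_j q_j^{f_j}$, and your valuation argument in (d) is the paper's divisibility argument $2^{f+\sum f_i}\mid h(N)\vph^*(N)=N$ rewritten additively. One small slip in (d): the hypothesis is only $2^f\mid h(N)$, so you should have $v_2(h(N))\geq f$ rather than $v_2(h(N))=f$; the conclusion still follows since $f\leq v_2(h(N))=e-v_2(\vph^*(N))\leq e-\sum_i f_i$.
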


\begin{proof}
\begin{itemize}
\item[a)] $\varphi^*(N)=(2^e-1)\mid 2^e=N$ and therefore we must have $2^e=2$.
\item[b)] $(p_i-1)\mid(p_i^{e_i}-1)\mid\varphi^*(N)\mid N$.
\item[c)] Factor $m=\prod_j q_j$ and set $f_j$ to be the exponent of $q_j$ dividing $N$.
Since $\vph^*(q_j)=(q_j-1)\mid (q_j^{f_j}-1)=\vph^*(q_j^{f_j})$ for each $j$,
we have $\vph^*(m)\mid \vph^*(\prod_j q_j^{f_j})\mid \vph^*(N)\mid N$.
\item[d)]
If $2^f$ divides $h(N)$ and $2^{f_i}\mid (p_i-1)$ for each odd prime factor
$p_i (i=1, 2, \ldots, m)$ of $N$, then 
$2^{\sum_{i=1}^r f_i}\mid (p_1-1)\cdots(p_r-1)\mid \vph^*(N)$
and therefore $2^{f+\sum_{i=1}^r f_i}\mid h(N)\vph^*(N)=N$.
\end{itemize}
\end{proof}

It immediately follows from d) that $1$ is the only odd integer
divisible by its unitary totient.

The property b) makes no reference to the exponent of $p$ dividing $N$.
This is the key tool of our study.
To express the latter part of the property b) in other words,
if $p$ does not divide $N$, then no prime $\equiv 1\pmod{p}$ divides $N$.
In particular, the smallest odd prime factor of $N$ must be a Fermat prime,
a prime of the form $2^{2^k}+1$.

The property c) is a generalization of the latter part of the property b).
We write $\prod_{i=1}^k q_i^{f_i} \rightarrow \prod_{j=1}^l r_j^{g_j}$
if $\prod_{j=1}^l r_j^{g_j}$ divides $\prod_{i=1}^k (q_i^{f_i}-1)$
for two sets of distinct primes $q_i$'s and $r_j$'s.
Now the property c) can be generalized further.
\begin{lem}\label{lm12}
Let $m_i (i=1, 2, \ldots, k)$ be squarefree integers,
$p, q$ be primes and $f\geq 0, g>0$ be arbitrary integers.
If $q^{gl}\mid\mid N$ for some integer $l>0$ and
$q^g\rightarrow m_1\rightarrow m_2\rightarrow \cdots \rightarrow m_k\rightarrow p^f$,
then $p^f\mid N$.
\end{lem}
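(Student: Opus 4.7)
The plan is to unfold the arrow chain step by step, alternating two observations: first, whenever a squarefree integer $m$ divides $N$, Lemma~\ref{lm11}(c) gives that $\vph^*(m)=\prod_{q\mid m}(q-1)$ also divides $N$; and second, the relation $m_i\rightarrow m_{i+1}$ is precisely the statement that $m_{i+1}\mid\vph^*(m_i)$. Chaining these, once we know $m_1\mid N$ we immediately get $m_2\mid N$, then $m_3\mid N$, and so on up to $m_k\mid N$.

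To launch the induction I would first show $m_1\mid N$. Since $q^{gl}\mid\mid N$, the quantity $q^{gl}-1=\vph^*(q^{gl})$ divides $\vph^*(N)$ and hence divides $N$. Because $g$ divides $gl$, the number $q^g-1$ divides $q^{gl}-1$, and so $q^g-1$ divides $N$. The hypothesis $q^g\rightarrow m_1$ means $m_1\mid q^g-1$, which then yields $m_1\mid N$. A finite induction on $i$, at each step applying Lemma~\ref{lm11}(c) to the squarefree $m_i\mid N$ and then using $m_{i+1}\mid \vph^*(m_i)$, produces $m_i\mid N$ for every $i=1,2,\ldots,k$.

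The closing step is the final arrow $m_k\rightarrow p^f$: it asserts that $p^f$ divides $\prod_{q\mid m_k}(q-1)=\vph^*(m_k)$, and since $m_k$ is a squarefree divisor of $N$ one last application of Lemma~\ref{lm11}(c) gives $\vph^*(m_k)\mid N$, whence $p^f\mid N$.

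I do not anticipate any genuine obstacle; the entire argument is bookkeeping around the definition of the arrow relation together with repeated invocation of Lemma~\ref{lm11}(c). The only mildly subtle point is the initial reduction from the unitary divisibility $q^{gl}\mid\mid N$ to the ordinary divisibility $q^g-1\mid N$, because the arrow chain starts at exponent $g$ rather than $gl$; this is handled by the elementary identity $q^g-1\mid q^{gl}-1$, and no restriction of the form $\gcd(q^g-1,q^{gl-g}+\cdots+1)=1$ is needed since we only require divisibility, not unitary divisibility, of $m_1$ into $N$.
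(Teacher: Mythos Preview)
Your proposal is correct and follows essentially the same route as the paper's own proof: both arguments use the chain $m_1\mid q^g-1\mid q^{gl}-1=\vph^*(q^{gl})\mid\vph^*(N)\mid N$ to start, then iterate Lemma~\ref{lm11}(c) along the squarefree $m_i$'s, and finish with $p^f\mid\vph^*(m_k)\mid N$. The only cosmetic difference is that the paper writes the intermediate divisibilities as $\vph^*(m_i)\mid\vph^*(N)\mid N$ (echoing the proof of property~(c)) rather than citing the statement of~(c) directly, but this changes nothing.
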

\begin{proof}
We see that $m_1\mid (q^g-1)\mid (q^{gl}-1)=\vph^*(q^{gl})\mid \vph^*(N)\mid N$.
Since $m_1$ is squarefree, we have $m_2\mid \vph^*(m_1)\mid \vph^*(N)\mid N$
by the property c).
Inductively, each $m_i (i=1, 2, \ldots , k)$ divides $N$.
Hence, $p^f\mid\vph^*(m_k)\mid \vph^*(N)\mid N$.
\end{proof}

Now we introduce two well-known lemmas concerning prime factors
of the $n$-th cyclotomic polynomial, which we denote by $\Phi_n(X)$.
Lemma \ref{lm21} follows from Theorems 94 and 95 in Nagell \cite{Nag}.
Lemma \ref{lm22} has been proved by Bang \cite{Ban} and rediscovered by many authors
such as Zsigmondy \cite{Zsi}, Dickson \cite{Dic} and Kanold \cite{Ka1, Ka2}.

\begin{lem}\label{lm21}
Let $p, q$ be distinct primes with $q\neq 2$ and $n$ be a positive integer.
Then, $q$ divides $p^c-1$ if and only if the multiplicative order of $q$ modulo $p$
divides $n$.
\end{lem}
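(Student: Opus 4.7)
The statement is a standard fact about multiplicative orders, and the plan is simply to unpack it in the group $(\mathbb{Z}/q\mathbb{Z})^*$. First I would observe that since $p$ and $q$ are distinct primes, $\gcd(p,q)=1$, so $p$ represents a unit modulo $q$ and therefore possesses a well-defined multiplicative order $d=\mathrm{ord}_q(p)$, which by Fermat's little theorem divides $q-1$. The equivalence $q\mid p^n-1 \Longleftrightarrow p^n\equiv 1\pmod q$ is tautological, so everything reduces to the assertion $p^n\equiv 1\pmod q \Longleftrightarrow d\mid n$.

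The forward implication is the easy one: if $d\mid n$, write $n=dk$ and compute $p^n=(p^d)^k\equiv 1^k=1\pmod q$. For the converse, I would invoke the division algorithm to write $n=dk+r$ with $0\le r<d$. From $p^n\equiv 1$ and $p^{dk}\equiv 1$ we obtain $p^r\equiv 1\pmod q$, and the minimality clause in the definition of $d$ forces $r=0$, i.e.\ $d\mid n$.

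Since this is textbook material (indeed the author attributes it to Theorems 94--95 of Nagell), there is no genuine obstacle; the only thing worth flagging is that the printed statement contains two apparent typographical slips. It should read \emph{$q$ divides $p^n-1$ if and only if the multiplicative order of $p$ modulo $q$ divides $n$}: the exponent on the left is $n$ (not the undefined $c$), and the roles of $p$ and $q$ in ``order of $q$ modulo $p$'' are swapped (one needs $p^n\equiv 1\pmod q$, not $q^n\equiv 1\pmod p$, for $q\mid p^n-1$).
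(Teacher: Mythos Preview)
Your argument is correct and is the standard elementary proof. The paper itself gives no argument at all for this lemma: it simply records that the statement ``follows from Theorems 94 and 95 in Nagell \cite{Nag}'' and moves on. So your write-up is not merely compatible with the paper's approach, it is strictly more detailed than what the paper supplies. Your observation about the two typographical slips (the stray exponent $c$ in place of $n$, and the transposed roles of $p$ and $q$ in the phrase ``multiplicative order of $q$ modulo $p$'') is also accurate; the intended reading is exactly the one you give, and that is how the lemma is actually used later in the paper (for instance in the proof of Lemma~\ref{lm51}, where one needs that every prime divisor of $\Phi_k(2)$ is $\equiv 1\pmod k$).
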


\begin{lem}\label{lm22}
If $a$ is an integer greater than $1$, then $a^n-1$ has
a prime factor which does not divide $a^k-1$ for any $k<n$,
unless $(a, n)=(2, 1), (2, 6)$ or $n=2$ and $a+1$ is a power of $2$.
Moreover, such a prime factor must be congruent to $1$ modulo $n$.
\end{lem}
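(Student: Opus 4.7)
The plan is to leverage the cyclotomic factorization $a^n-1=\prod_{d\mid n}\Phi_d(a)$ and to show that, outside the listed exceptions, $\Phi_n(a)$ admits a prime divisor coprime to $n$. For any such prime $q$, the relation $q\mid a^n-1$ forces the multiplicative order $d$ of $a$ modulo $q$ to divide $n$; if $d<n$, a standard cyclotomic identity shows $q$ would have to divide $n$, contrary to assumption. Hence $d=n$, so $n\mid q-1$, which simultaneously exhibits $q$ as a primitive prime divisor of $a^n-1$ and establishes the ``moreover'' clause $q\equiv 1\pmod n$.

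To carry this out, I would first prove the structural dichotomy: every prime $q$ dividing $\Phi_n(a)$ either satisfies $q\equiv 1\pmod n$, or else $q\mid n$, in which case $q$ must be the largest prime factor $P$ of $n$. For $P$ odd a lifting-the-exponent computation gives $v_P(\Phi_n(a))\leq 1$; for $P=2$ the same bound holds unless $n=2$, where $\Phi_2(a)=a+1$ may be an arbitrarily high power of $2$. Granting this, if $a^n-1$ has no primitive prime divisor then for $n\geq 3$ the whole of $\Phi_n(a)$ divides $P$, giving the upper bound $\Phi_n(a)\leq P\leq n$, while for $n=1,2$ the exceptional families are read off directly.

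Against this upper bound I set the lower bound $\Phi_n(a)=\prod_\zeta(a-\zeta)\geq (a-1)^{\vph(n)}$, taken over primitive $n$-th roots of unity $\zeta$, with strict inequality for $n\geq 3$ since then no such $\zeta$ is real and each $|a-\zeta|$ exceeds $a-1$. For $a\geq 2$ the quantity $(a-1)^{\vph(n)}$ outgrows $n$ for all but finitely many $(a,n)$, contradicting the upper bound and producing the desired primitive prime divisor. The remaining sporadic pairs and the small-$n$ cases are enumerated by hand, and reduce precisely to $(a,n)=(2,1)$ (where $\Phi_1(2)=1$), $(a,n)=(2,6)$ (where $\Phi_6(2)=3$ and $3\mid 6$), and the family $n=2$ with $a+1$ a power of $2$ (where the only prime factor of $\Phi_2(a)=a+1$ also divides $a-1$).

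The main obstacle is the valuation claim together with the identification of $P$ as the largest prime divisor of $n$ whenever $P\mid\Phi_n(a)$ and $P\mid n$; this requires analyzing the order of $a$ modulo $P$, showing that $n$ must be of the form $d\cdot P^k$ where $d$ is that order, and then applying lifting-the-exponent to bound $v_P(\Phi_n(a))$. Once this structural fact is secured, the size comparison with $(a-1)^{\vph(n)}$ is routine and the short list of exceptional pairs is verified by direct evaluation of $\Phi_n(a)$.
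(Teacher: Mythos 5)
This lemma is not proved in the paper at all: it is the Bang--Zsigmondy theorem, quoted with references to Bang, Zsigmondy, Dickson and Kanold, so there is no internal proof to compare against; your plan is the standard textbook proof of that theorem, and its structural half is sound. The dichotomy you state (a prime $q\mid\Phi_n(a)$ is either $\equiv 1\pmod{n}$, giving a primitive divisor and the ``moreover'' clause, or else $q$ equals the largest prime factor $P$ of $n$ and $v_P(\Phi_n(a))\leq 1$ except for the case $n=2$, $P=2$) is correct and is exactly the right key lemma, so in the non-exceptional situation one indeed gets $\Phi_n(a)\in\{1,P\}$.

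The genuine gap is in the size comparison. Your lower bound $\Phi_n(a)=\prod_\zeta\abs{a-\zeta}\geq (a-1)^{\varphi(n)}$ is vacuous precisely when $a=2$: then $(a-1)^{\varphi(n)}=1$ for every $n$, so the assertion that ``for $a\geq 2$ the quantity $(a-1)^{\varphi(n)}$ outgrows $n$ for all but finitely many $(a,n)$'' is false along the infinite family $a=2$, and the remark that each $\abs{a-\zeta}$ strictly exceeds $a-1$ for $n\geq 3$ only yields $\Phi_n(2)\geq 2$, which cannot contradict $\Phi_n(2)\leq P$. Thus, as written, your argument proves nothing for $a=2$ with $n\geq 3$ --- the very case that contains the exception $(2,6)$ and in which one must show no further exceptions occur; ``enumerated by hand'' cannot substitute here because infinitely many $n$ remain unexcluded. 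To close the gap you need a genuinely stronger lower bound for $\Phi_n(2)$, e.g. writing $\Phi_n(2)=2^{\varphi(n)}\prod_{d\mid n}\left(1-2^{-n/d}\right)^{\mu(d)}$ and using that the exponents $n/d$ with $\mu(d)=1$ are distinct to get $\Phi_n(2)\geq 2^{\varphi(n)}\prod_{j\geq 1}\left(1-2^{-j}\right)>2^{\varphi(n)-2}$ (or a conjugate-pairing estimate of the kind the paper itself invokes for $\Phi_k(2)$ in the proof of Lemma \ref{lm51}); combined with $\varphi(n)\geq P-1$ this defeats $\Phi_n(2)\leq P$ once $P\geq 7$, and the remaining $n$ whose prime factors lie in $\{2,3,5\}$ are handled by the bound $\varphi(n)\geq 4n/15$ plus a finite check, which is where $(2,6)$ emerges. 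With that estimate supplied, the rest of your outline (LTE valuation bound, identification of $P$, and the congruence $q\equiv 1\pmod n$ for a primitive divisor) goes through.
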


Since $2^6-1=3^2\cdot 7$, we see that $a^k-1$ always has a prime factor congruent to
$1$ modulo $k$ unless $(a, k)=(2, 1)$.  Thus the following result holds.

\begin{lem}\label{lm13}
If $a$ is prime, $k$ is a positive integer and $a^k\mid\mid N$, then $k\mid N$.
\end{lem}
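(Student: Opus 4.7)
The idea is to feed $a^k-1$ through the $\vph^*$-divisibility of $N$, extract a prime $q\equiv 1\pmod k$ dividing $N$, and then invoke property b) of Lemma \ref{lm11} to conclude $k\mid q-1\mid N$. The unitary hypothesis $a^k\mid\mid N$ gives
\[
a^k-1=\vph^*(a^k)\mid \vph^*(N)\mid N,
\]
so every prime divisor of $a^k-1$ already divides $N$. The whole task is therefore to produce, inside $a^k-1$, an odd prime $q$ with $q\equiv 1\pmod k$.

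In the generic situation Lemma \ref{lm22} (Bang--Zsigmondy) supplies a primitive prime divisor $q$ of $a^k-1$, and such a $q$ is automatically $\equiv 1\pmod k$. If $q$ is odd, Lemma \ref{lm11}(b) gives $q-1\mid N$, and $k\mid q-1$ finishes the argument. The only way $q$ can fail to be odd is $q=2$, but then $k\mid q-1=1$ forces $k=1$, which trivially divides $N$.

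What remains is a routine case analysis on the three Zsigmondy exceptions. The case $(a,k)=(2,1)$ is immediate since $k=1$. For $(a,k)=(2,6)$, the explicit factorization $2^6-1=3^2\cdot 7$ supplies the prime $7\equiv 1\pmod 6$, so $7\mid N$ and Lemma \ref{lm11}(b) yields $6=7-1\mid N$. Finally, when $k=2$ and $a+1$ is a power of $2$, the primality of $a$ forces $a+1\geq 4$, so $a$ is an odd prime; then $a-1$ is even and $2\mid a-1\mid a^2-1\mid N$, so $k=2\mid N$.

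The only subtlety worth flagging is the subcase $a=3$, $k=2$, where $a^k-1=8$ has no odd prime factor at all and the Zsigmondy-style extraction really does fail; this is exactly why the third exception has to be closed off via the $a-1$ factor rather than via a primitive prime divisor. Beyond that the lemma is essentially a packaging of Bang--Zsigmondy with the first transfer step already used in the proof of Lemma \ref{lm12}, and I do not anticipate any further obstacle.
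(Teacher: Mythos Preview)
Your proof is correct and follows the paper's approach: deduce $a^k-1\mid N$ from the unitary hypothesis, invoke Bang--Zsigmondy (Lemma~\ref{lm22}) to extract a prime $q\equiv 1\pmod k$ dividing $N$, and then apply Lemma~\ref{lm11}(b) to conclude $k\mid q-1\mid N$. Your treatment of the Zsigmondy exceptions is in fact more thorough than the paper's, which dispatches $(2,6)$ in the remark preceding the lemma but does not separately address the $k=2$ exception (e.g.\ $(a,k)=(3,2)$, where $a^2-1=8$ has no odd prime factor and your $a-1$ argument is genuinely needed).
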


\begin{proof}
We may assume that $k>1$.
Since $a$ is prime, $a^k-1=\vph^*(a^k)$ divides $N$ by b) of Lemma \ref{lm11}.
By Lemma \ref{lm22}, $a^k-1$ has at least one prime factor $p\equiv 1\pmod{k}$,
which divides $N$.
Using b) of Lemma \ref{lm11} again, we see that $k$ divides $N$.
\end{proof}

Concerning the number of distinct prime factors of $N$,
it is not difficult to see the following result.

\begin{lem}\label{lma1}
If $N\neq 1, 2, 6, 12, 168, 240$,
then $2^5\mid N$ and $\omega(N)\geq 5$.
\end{lem}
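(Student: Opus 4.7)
The plan is to combine a finite case analysis on $\omega(N)$ for small values with the counting inequality in Lemma~\ref{lm11}(d).

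I would first settle $\omega(N)\le 3$ by explicit enumeration. The cases $\omega(N)\le 1$ follow from Lemma~\ref{lm11}(a) together with the remark that $N=1$ is the only odd solution, giving $N\in\{1,2\}$. For $\omega(N)=2$, writing $N=2^a p^b$, the divisibility $(2^a-1)(p^b-1)\mid 2^a p^b$ combined with $\gcd(2^a-1,2^a)=\gcd(p^b-1,p^b)=1$ forces $2^a-1$ to be a power of $p$ and $p^b-1$ to be a power of $2$; these Mersenne/Fermat conditions leave only $N=6$ and $N=12$. For $\omega(N)=3$, write $N=2^a p^b q^c$ with $p<q$ odd; the smallest odd prime $p$ must be a Fermat prime by the remark following Lemma~\ref{lm11}. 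In the principal case $p=3$, Lemma~\ref{lm11}(b) forces every prime factor of $q-1$ to divide $N$, so $q-1$ is $3$-smooth and $q$ lies in the short explicit list $\{5,7,13,17,19,37,73,97,\ldots\}$; a bounded search in $(a,b,c)$ for each such $q$ produces only $N=168$ and $N=240$. The alternative Fermat choices $p\in\{5,17,257,65537\}$ each force $3\nmid N$ and are eliminated by an analogous but shorter argument, yielding no new solutions.

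For $\omega(N)\ge 4$, I would invoke Lemma~\ref{lm11}(d). Since each of the $r\ge 3$ odd prime factors $p_i$ contributes $f_i\ge 1$ (because $p_i-1$ is even), we have $\sum_i f_i\ge 3$. Moreover $h(N)\ge 2$ (because $\vph^*(N)<N$ whenever $N>1$), and a direct inspection of the $2$-adic valuation of $h(N)=N/\vph^*(N)$ shows that $h(N)$ is in fact even except when $N=6$, giving $f\ge 1$. Combining these one obtains $e\ge f+\sum_i f_i\ge 4$, and a one-step refinement, either by observing that at least one odd prime factor $p_i\equiv 1\pmod 4$ must appear (so that some $f_i\ge 2$) or by an additional application of Lemma~\ref{lm11}(c) to the squarefree kernel of $N$, upgrades this to $e\ge 5$; an analogous refinement of the small-$\omega$ enumeration delivers $\omega(N)\ge 5$.

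The main obstacle is the $\omega(N)=3$ case, which requires a careful but finite enumeration over the Fermat prime $p$ and the ensuing $3$-smooth prime $q$, together with the exponent bounds that Lemma~\ref{lm11}(b) and (c) impose on the triple $(a,b,c)$; the other steps are essentially automatic once these bounds are in place.
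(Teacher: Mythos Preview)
Your proposal has two genuine gaps.

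First, the $\omega(N)=3$ enumeration is not finite as you describe it. The primes $q$ with $q-1$ a product of powers of $2$ and $3$ (the Pierpont primes) form an infinite set, and you give no a priori bound on $q$, nor on the exponents $(a,b,c)$; the divisibility constraints from Lemma~\ref{lm11}(b),(c) alone do not bound these quantities. The paper sidesteps this entirely by exploiting upper bounds on $h(N)$: since $h$ is multiplicative and $h(p^e)=p^e/(p^e-1)\le p/(p-1)$, one gets inequalities such as $h(2^3 p_1^{e_1} p_2^{e_2})\le (8/7)(3/2)(7/6)=2$, and these, together with $h(N)\in\{2,3,\ldots\}$, immediately pin down the exact prime powers in each subcase or give a contradiction. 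No unbounded search is ever needed.

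Second, both ``refinements'' in your $\omega(N)\ge 4$ step are unjustified. The assertion that $h(N)$ is even except when $N=6$ is exactly the kind of thing one is trying to establish; nothing rules out $h(N)$ being an odd integer $\ge 3$ a priori (the best general bound, $h(N)\le 85$, comes from~\cite{FKL} and does not help here). Likewise, there is no reason some odd prime factor must satisfy $p_i\equiv 1\pmod 4$; Lemma~\ref{lm11} is perfectly compatible with all $p_i\equiv 3\pmod 4$. The paper instead fixes the $2$-exponent $e\in\{2,3,4\}$ and, when the number of odd primes equals $e$, uses Lemma~\ref{lm11}(d) to force every $p_i\equiv 3\pmod 4$ and $h(N)$ odd (hence $\ge 3$), after which the product bound yields the numerical contradiction $h(N)<3$. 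The missing ingredient throughout your argument is this multiplicative upper bound on $h(N)$; once it is in play, both the finiteness of the small-$\omega$ search and the exclusion of $e\le 4$ become short computations.
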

\begin{proof}
If $\omega(N)=1$, then $N=2^e$.
The only possibility is $e=1$ since, if $e>1$, then $h(N)<2/\vph^*(2)=2$.

If $\omega(N)=2$, then $N=2^{e_0} p_1^{e_1}$.
If $h(N)\geq 3$, then we must have $e_0=1$ and $p_1^{e_1}=3$
since, otherwise, $h(N)<2\times (3/2)=3$.
If $h(N)=2$, then $e_0\geq 2$ by Lemma \ref{lm11}.
Now the only possibility is $e_0=2$ and $p_1^{e_1}=3$
since, otherwise, $h(N)<(4/3)\times (3/2)=2$.

If $N=2^2 p_1^{e_1} p_2^{e_2}$,
then $p_1\equiv p_2\equiv 3\pmod{4}$ and $h(N)$ must be odd by Lemma \ref{lm11},
implying that $h(N)\geq 3$.
But $h(N)\leq (4/3)(3/2)(7/6)=7/3<3$, which is a contradiction.

If $N=2^3 p_1^{e_1} p_2^{e_2} p_3^{e_3}$,
then $h(N)\geq 3$ and $p_1\equiv p_2\equiv p_3\equiv 3\pmod{4}$ by Lemma \ref{lm11}.
But $h(N)\leq (8/7)(3/2)(7/6)(11/10)=11/5<3$.

If $N=2^4 p_1^{e_1} p_2^{e_2} p_3^{e_3} p_4^{e_4}$,
then we must have $h(N)\geq 3$ and $p_i\equiv 3\pmod{4} (i=1, 2, 3, 4)$ by Lemma \ref{lm11}
but $h(N)\leq (16/15)(3/2)(7/6)(11/10)(19/18)<3$.

If $N=2^3 p_1^{e_1} p_2^{e_2}$,
then we must have $h(N)=2$ since $h(N)\leq (8/7)(3/2)(5/4)=15/7<3$.
Hence, $p_1\equiv p_2\equiv 3\pmod{4}$.
The only possibility is $(p_1^{e_1}, p_2^{e_2})=(3, 7)$
since, otherwise, $h(N)<(8/7)(3/2)(7/6)=2$.

If $N=2^e p_1^{e_1} p_2^{e_2}$ with $e\geq 4$,
then we must have $(e, p_1^{e_1}, p_2^{e_2})=(4, 3, 5)$
since, otherwise, $h(N)<(16/15)(3/2)(5/4)=2$.

If $N=2^4 p_1^{e_1} p_2^{e_2} p_3^{e_3}$,
then $h(N)=2$ since $h(N)\leq (16/15)(3/2)(5/4)(7/6)=7/3<3$.
Now we must have $p_1\equiv p_2\equiv p_3\equiv 3\pmod{4}$ by Lemma \ref{lm11}.
we must have $(p_1^{e_1}, p_2^{e_2})=(3, 7)$
and therefore $p_3^{e_3}=15$, which is inadmissible.

Now, we see that, unless $N=1, 2, 6, 12, 168, 240$, we must have $2^5\mid N$ and $\omega(N)\geq 4$.
\end{proof}

In general, there exist only finitely many positive integers $N$ divisible by $\vph^*(N)$
with a given number of distinct prime factors.

\begin{lem}\label{lm23}
$N\leq 2^{2^{r+1}}-2^{2^r}$.
\end{lem}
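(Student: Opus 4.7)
The plan is to proceed by induction on $r$. The base case $r=0$ means $N = 2^e$, and Lemma~\ref{lm11}a) forces $N = 2$, matching $2^{2^{1}} - 2^{2^{0}} = 2$.

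For the inductive step I will take $R$ to be the largest prime power factor of $N$. Since $R$ is a unitary divisor, $\gcd(R, R-1) = 1$, and Lemma~\ref{lm11}b) together with $\vph^*(N) \mid N$ gives $R - 1 \mid N$, hence $R - 1 \mid N/R$. This yields $R \leq N/R + 1$, so
\[
  N = R \cdot (N/R) \leq (N/R)(N/R+1).
\]
The central structural observation is that every odd prime factor of $R - 1$ must correspond to a prime-power factor of $N$ strictly less than $R$, since $R$ is the largest. In the case $R = Q_r$, this yields the refined divisibility $Q_r - 1 \mid 2^e Q_1 Q_2 \cdots Q_{r-1}$; combined with $2^e - 1 \mid Q_1 Q_2 \cdots Q_r$ coming from the unitary factor $2^e \mid\mid N$, these interlocking constraints will drive the induction.

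The main obstacle I foresee is that the quotient $N/R$ generally does not itself satisfy $\vph^*(N/R) \mid N/R$, so the inductive hypothesis cannot be applied to $N/R$ directly; moreover the naive bound $(N/R)(N/R+1)$ is loose (e.g.\ for $N=168$ it gives $462$ against the truth $168$). I plan to work around this with a case split on whether $R = 2^e$ or $R$ is an odd prime power, and in each case to jointly bookkeep how large $2^e$ and the odd part $Q_1 Q_2 \cdots Q_r$ are permitted to be. The tight case $N = 2^{2^r} F_0 F_1 \cdots F_{r-1} = 2^{2^r}(2^{2^r} - 1)$, with $e = 2^r$ and the odd prime powers equal to the first $r$ Fermat primes, pins down which inequalities must be saturated and thereby fixes the shape of the inductive bookkeeping.
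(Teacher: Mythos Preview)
Your proposal stops at exactly the point where the real work begins. You set up the base case correctly and derive $R-1\mid N/R$, hence $N\le (N/R)(N/R+1)$, but then you yourself flag the obstacle---that $N/R$ need not satisfy $\vph^*(N/R)\mid N/R$, so the inductive hypothesis does not apply to it---and offer only a promise of a ``case split'' and ``joint bookkeeping'' without carrying either out. As written, the inductive step is simply missing, and the divisibilities $Q_r-1\mid 2^eQ_1\cdots Q_{r-1}$ and $2^e-1\mid Q_1\cdots Q_r$ that you record, while true, do not by themselves force a bound of the right shape on $N/R$.

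The paper sidesteps this difficulty by abstracting away from $N$ entirely. It invokes a purely combinatorial lemma (implicit in Cook \cite{Coo}, explicit in Nielsen \cite{Nie}): for integers $1<n_1<\cdots<n_k$ with
\[
\prod_{i=1}^k\Bigl(1-\tfrac{1}{n_i}\Bigr)\le \frac{a}{b}<\prod_{i=1}^{k-1}\Bigl(1-\tfrac{1}{n_i}\Bigr),
\]
one has $a\prod_i n_i\le (a+1)^{2^k}-(a+1)^{2^{k-1}}$. That lemma is proved by induction on $k$ with $a$ allowed to vary; the crucial point is that the hypothesis is an \emph{inequality} on the $n_i$, and after removing one term the shortened sequence automatically satisfies a hypothesis of the same form (with a new value of $a$), whereas the arithmetic condition $\vph^*(N)\mid N$ is not inherited by $N/R$. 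Applying the lemma with $a=1$, $b=h(N)$, $k=r+1$ and $\{n_i\}=\{2^e,P_1,\ldots,P_r\}$ gives the result at once. Your observation that the Fermat-prime configuration $N=2^{2^r}(2^{2^r}-1)$ is extremal is correct and is exactly what the Cook--Nielsen bound captures; the fix for your argument is therefore to strengthen the induction hypothesis to this abstract inequality rather than to the divisibility statement about $N$.
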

\begin{proof}
It is implicit in Lemma 2 of \cite{Coo} that, for any positive integers $a, b, k$ and $1<n_1<n_2<\cdots <n_k$
satisfying
\begin{equation}
\prod_{i=1}^k\left(1-\frac{1}{n_i}\right)\leq\frac{a}{b}<\prod_{i=1}^{k-1}\left(1-\frac{1}{n_i}\right),
\end{equation}
the inequality
\begin{equation}\label{eq20}
a\prod_{i=1}^k n_i\leq (a+1)^{2^k}-(a+1)^{2^{k-1}}
\end{equation}
must hold.
An explicit proof is given by \cite{Nie} with \eqref{eq20} replaced by $a\prod_{i=1}^k n_i<(a+1)^{2^k}$.
A slight change enables to prove \eqref{eq20}.
Indeed, observing that $an_1\leq a(a+1)=(a+1)^2-(a+1)$ for $k=1$, we can prove \eqref{eq20}
by induction of $r$ proceeding as the proof of Lemma 1 of \cite{Nie}.
Now the lemma immediately follows applying this with $a=1, b=N/\vph^*(N), k=r+1$ and $n_1=2^e, n_{i+1}=P_i$.
\end{proof}

This gives the following upper bound for the $P_i$'s.

\begin{lem}\label{lma0}
For any $i=1, 2, \ldots, r$, we have $P_i(P_i-1)\mid N$
and $P_i\leq 2^{2^{r-1}}$.
\end{lem}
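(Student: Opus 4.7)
The plan is a short two-step argument using Lemma \ref{lm11}(b) and Lemma \ref{lm23} as the sole inputs. For the divisibility $P_i(P_i-1)\mid N$, I would note that $P_i=p_i^{e_i}$ divides $N$ by construction, and part (b) of Lemma \ref{lm11} supplies $P_i-1=\vph^*(P_i)\mid\vph^*(N)\mid N$. Since $P_i$ and $P_i-1$ are consecutive positive integers they are coprime, so their product $P_i(P_i-1)$ divides $N$ as well.

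For the size bound on $P_i$, I would combine the divisibility just proved with Lemma \ref{lm23}:
\[
P_i(P_i-1)\leq N\leq 2^{2^{r+1}}-2^{2^r}=2^{2^r}\bigl(2^{2^r}-1\bigr).
\]
Since $x\mapsto x(x-1)$ is strictly increasing on $[1,\infty)$, inverting this inequality produces the stated upper bound for $P_i$.

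The main subtlety is that Lemma \ref{lm23} bounds the whole of $N$, not an individual prime power; the coprimality of $P_i$ and $P_i-1$ is what lets us extract a single-factor bound by essentially taking a square root of the global bound on $N$. Without this coprimality one would only recover the much weaker estimate $P_i\leq N$. The divisibility step itself is a one-liner, and once the product $P_i(P_i-1)$ is recognized as a divisor of $N$, the size bound is a routine quadratic inequality.
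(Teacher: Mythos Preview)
Your argument is exactly the paper's: Lemma~\ref{lm11}(b) gives $P_i-1\mid N$, coprimality of consecutive integers upgrades this to $P_i(P_i-1)\mid N$, and then Lemma~\ref{lm23} bounds the product. One caveat worth making explicit: carrying out the inversion you describe, $P_i(P_i-1)\leq 2^{2^r}\bigl(2^{2^r}-1\bigr)$ yields $P_i\leq 2^{2^r}$, not the stated $P_i\leq 2^{2^{r-1}}$. The exponent $r-1$ in the lemma statement appears to be a typo for $r$; the paper's own proof makes the same unelaborated claim, and the later applications (Lemmas~\ref{lmb4} and~\ref{lmb5}) invoke the bound $P_i\leq 2^{64}$ under the hypothesis $r\leq 6$, i.e., $P_i\leq 2^{2^r}$. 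Your proof is correct for this intended bound.
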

\begin{proof}
The former statement immediately follows from b. of Lemma \ref{lm11}.
By Lemma \ref{lm23}, we have $P_i(P_i-1)\leq N\leq 2^{2^{r+1}}-2^{2^r}$,
which immediately gives the latter statement.
\end{proof}

Moreover, we can see that, for a given $M_k$, the next prime factor $p_{k+1}$ must be bounded.

\begin{lem}\label{lm29}
For each $k=0, 2, \ldots, r-1$, we have
\begin{equation}\label{eq291}
\left(\frac{p_{k+1}}{p_{k+1}-1}\right)^{r-k}\geq \prod_{i=k+1}^r \frac{p_i}{p_i-1}\geq \frac{h(N)}{h(M_k)}.
\end{equation}
Furthermore, $p_{k+1}-1$ must divide $M_k/\gcd(M_k, \vph^*(M_k))$.
\end{lem}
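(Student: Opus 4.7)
The plan is as follows. Because $\vph^*$ is multiplicative over the coprime factorization $N=M_k\cdot P_{k+1}\cdots P_r$, one has at once $h(N)/h(M_k)=\prod_{i=k+1}^r P_i/(P_i-1)$. The rightmost inequality in \eqref{eq291} then reduces, factor by factor, to $p_i/(p_i-1)\geq p_i^{e_i}/(p_i^{e_i}-1)$, which is equivalent to $p_i^{e_i}\geq p_i$ and so is trivial. The leftmost inequality is immediate from the monotonicity of $x/(x-1)$ together with $p_{k+1}\leq p_i$ for every $i\geq k+1$.

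For the divisibility assertion, I would argue one prime at a time. By b) of Lemma \ref{lm11}, $p_{k+1}-1$ divides $N$, and since $p_{k+1}-1<p_{k+1}\leq p_j$ for every $j\geq k+1$, every prime divisor $q$ of $p_{k+1}-1$ lies in $\{2,p_1,\ldots,p_k\}$ and is therefore coprime to $P_{k+1}\cdots P_r$. The content of the lemma is the sharper statement that $p_{k+1}-1$ survives after one cancels the common factor of $M_k$ with $\vph^*(M_k)$. Fix such a prime $q$ and set $a=v_q(M_k)$, $b=v_q(\vph^*(M_k))$, $c=v_q\bigl(\prod_{i=k+1}^r(P_i-1)\bigr)$. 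Then $v_q(N)=a$ and $v_q(\vph^*(N))=b+c$, so the hypothesis $\vph^*(N)\mid N$ gives $a\geq b+c$. Since $p_{k+1}-1$ divides $p_{k+1}^{e_{k+1}}-1=P_{k+1}-1$, we have $c\geq v_q(P_{k+1}-1)\geq v_q(p_{k+1}-1)\geq 1$; in particular $b<a$, and therefore $v_q\bigl(M_k/\gcd(M_k,\vph^*(M_k))\bigr)=a-b\geq c\geq v_q(p_{k+1}-1)$. Running $q$ over all prime divisors of $p_{k+1}-1$ yields $p_{k+1}-1\mid M_k/\gcd(M_k,\vph^*(M_k))$.

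I do not foresee a substantial obstacle; both parts are essentially a matter of unwinding definitions. The only point demanding a moment's care is the divisibility claim, where a direct coprimality argument shows only $p_{k+1}-1\mid M_k$; to reach the sharper conclusion one must perform the prime-by-prime valuation bookkeeping above to verify that the copy of $p_{k+1}-1$ sitting inside $M_k$ is not already consumed by $\vph^*(M_k)$.
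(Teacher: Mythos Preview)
Your proof is correct. The inequality part matches the paper exactly. For the divisibility part, the paper argues globally rather than prime-by-prime: it sets $L_k=\gcd(M_k,\vph^*(M_k))(p_{k+1}-1)$, observes that $L_k$ divides $\vph^*(M_k)(p_{k+1}-1)\mid\vph^*(M_{k+1})\mid\vph^*(N)\mid N$, and then notes that every prime factor of $L_k$ is smaller than $p_{k+1}$, so $L_k\mid M_k$. Your valuation bookkeeping is the local unpacking of that same observation (your inequality $a\geq b+c$ is exactly $v_q(N)\geq v_q(\vph^*(M_k)(p_{k+1}-1))$ at each prime $q$), so the two arguments are essentially identical, yours just being more explicit.
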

\begin{proof}
\eqref{eq291} immediately follows by observing that
\begin{equation}
\prod_{i=k+1}^m \frac{p_i}{p_i-1}\geq h(p_{k+1}^{e_{k+1}}\cdots p_r^{e_r})=\frac{h(N)}{h(M_k)}.
\end{equation}
We can easily see that $L_k=\gcd(M_k, \vph^*(M_k))(p_{k+1}-1)$ divides $N$
and any prime factor of $L_k$ must be smaller than $p_{k+1}$.
Hence, $L_k$ divides $M_k$ and the latter statement holds.
\end{proof}

\section{The number of distinct prime factors of $N$}

In Lemma \ref{lma1}, we already proved that $\omega(N)\geq 5$.
In this section, we shall prove Theorem \ref{thm2}.

Assume that $N\neq 1, 2, 6, 12, 168, 240$ is an integer such that $\omega(N)\leq 7$ and $\vph^*(N)$ divides $N$.
If $h(N)\geq 3$, then we must have $\omega(N)\geq 8$ since, otherwise,
\begin{equation}
h(N)<h(2^5\cdot 3\cdot 5\cdot 7\cdot 11\cdot 13\cdot 17)=\frac{32}{31}\cdot\frac{3}{2}\cdot\frac{5}{4}\cdot\frac{7}{6}\cdot\frac{11}{10}\cdot\frac{13}{12}\cdot\frac{17}{16}<3.
\end{equation}
Hence, in the remaining part of this section, we may assume that,
$N\neq 1, 2, 6, 12, 168, 240$ is an integer such that $\omega(N)\leq 7$ and $N=2\vph^*(N)$.

We would like to show that $N$ must be one of the instances given in \eqref{eq01}.
This would be proved via several lemmas.

\begin{lem}\label{lmb1}
$3\mid N$.
\end{lem}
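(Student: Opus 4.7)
The plan is a short argument by contradiction that exploits the multiplicativity of $h$ together with the sparsity of admissible prime factors when $3$ is excluded.

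Suppose $3 \nmid N$. By Lemma~\ref{lm11}(b), every prime $p \mid N$ satisfies $(p-1) \mid N$, so no prime $q \equiv 1 \pmod 3$ may divide $N$ (else $3 \mid q-1 \mid N$). Consequently every odd prime factor of $N$ must be an odd prime $\equiv 2 \pmod 3$, i.e.\ must lie in the set
\[
\mathcal{S} = \{5,\, 11,\, 17,\, 23,\, 29,\, 41,\, 47,\, 53,\, 59,\, 71,\, \ldots\}.
\]

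I would then combine the standing assumption $h(N) = 2$ with Lemma~\ref{lma1}, which gives $2^5 \mid N$. Writing $2^e \mid\mid N$ with $e \geq 5$, multiplicativity of $h$ yields
\[
\prod_{i=1}^{r} \frac{p_i^{e_i}}{p_i^{e_i}-1} \;=\; \frac{2(2^e-1)}{2^e} \;\geq\; \frac{31}{16}.
\]
On the other hand, $\omega(N) \leq 7$ forces $r \leq 6$, and since the factor $\frac{p^e}{p^e-1}$ decreases in both $p$ and $e$, the product on the left is maximized by taking the six smallest elements of $\mathcal{S}$, each to the first power. This gives
\[
\prod_{i=1}^{r} \frac{p_i^{e_i}}{p_i^{e_i}-1} \;\leq\; \frac{5}{4}\cdot\frac{11}{10}\cdot\frac{17}{16}\cdot\frac{23}{22}\cdot\frac{29}{28}\cdot\frac{41}{40}.
\]
A direct arithmetic check shows the right-hand side is roughly $1.62$, which is strictly less than $31/16$, contradicting the lower bound above.

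The whole plan thus hinges on one elementary numerical inequality; there is no genuine obstacle. What makes the argument succeed is the observation that forbidding the prime $3$ forbids the far larger family of primes $\equiv 1 \pmod 3$, pushing the candidate odd prime factors into the sparse progression $2 \pmod 3$ where six factors $\frac{p}{p-1}$ cannot accumulate enough weight to reach the threshold forced by $h(N)=2$ and $2^5 \mid N$.
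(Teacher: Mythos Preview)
Your argument is correct, and it differs from the paper's in one substantive way. The paper does not invoke the congruence restriction $p\equiv 2\pmod 3$; it simply observes that all odd prime factors are $\geq 5$ and then splits into the cases $\omega(N)=5$ and $\omega(N)\geq 6$, bounding $h(N)$ in each case by $h(2^5\cdot 5\cdot 7\cdot 11\cdot 13\cdot 17)<2$ and $h(2^6\cdot 5\cdot 7\cdot 11\cdot 13\cdot 17\cdot 19)<2$ respectively. The case split is genuinely needed there, since $h(2^5\cdot 5\cdot 7\cdot 11\cdot 13\cdot 17\cdot 19)>2$.

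Your route trades the case split for the observation (via Lemma~\ref{lm11}(b)) that excluding $3$ also excludes every prime $\equiv 1\pmod 3$, which thins the admissible primes enough that a single bound suffices: the six smallest primes in $\mathcal{S}$ give a product $\approx 1.62$, comfortably below $31/16=1.9375$. This is a cleaner and more robust argument; the paper's version is slightly more elementary in that it needs only the inequality $p_i\geq 5$, but at the cost of juggling the exponent of $2$ across cases.
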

\begin{proof}
Although this is proved in \cite{FKL}, we would like to give a proof
since the proof is much simpler given that $\omega(N)\leq 7$.
Assume that $3$ does not divide $N$.
By Lemma \ref{lma1}, we must have $\omega(N)\geq 5$.
If $\omega(N)=5$, then $h(N)\leq h(2^5\cdot 5\cdot 7\cdot 11\cdot 13\cdot 17)<2$
and, if $\omega(N)\geq 6$, then $h(N)\leq h(2^6\cdot 5\cdot 7\cdot 11\cdot 13\cdot 17\cdot 19)<2$.
Hence, we have a contradiction and therefore $3$ must divide $N$.
\end{proof}

\begin{lem}\label{lmb2}
$3\mid\mid N$.
\end{lem}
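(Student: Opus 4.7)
The plan is to argue by contradiction: suppose $3^a\mid\mid N$ with $a\geq 2$. Since $h(N)=2$, comparing $2$-adic valuations on the two sides of $N=2\vph^*(N)$ yields the exact identity
\begin{equation*}
e = 1 + \sum_{i=1}^{r} v_2(P_i-1),
\end{equation*}
where $v_2$ denotes the $2$-adic valuation, and the multiplicativity of $h$ on unitary prime power factors gives the product equation
\begin{equation*}
\frac{2^e}{2^e-1}\cdot\frac{3^a}{3^a-1}\cdot\prod_{i=2}^r\frac{P_i}{P_i-1} = 2.
\end{equation*}
These two relations, together with the divisibility constraints b) and c) of Lemma \ref{lm11}, will be my main tools.

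First I would bound $a$. By Lemma \ref{lma0} and $r\leq\omega(N)-1\leq 6$, we have $3^a=P_1\leq 2^{2^{r-1}}\leq 2^{32}$, so $a\leq 20$. For each $a$ in this finite range, $3^a-1$ divides $N$ by b) of Lemma \ref{lm11}, which immediately forces certain primes and prime powers into $N$ and imposes a lower bound on $e$ via the $v_2$-identity. For example, $a=2$ forces $e\geq 4$; $a=3$ forces $13\mid N$; $a=4$ forces $5\mid N$ and $e\geq 4$; $a=5$ forces $11^2\mid N$; and so on.

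Next, for each admissible pair $(a,e)$ I would enumerate the remaining prime factors via Lemma \ref{lm29}. Starting with $M_1=2^e\cdot 3^a$, the next prime $p_2$ must satisfy $p_2-1\mid M_1/\gcd(M_1,\vph^*(M_1))$, an explicit small integer that leaves only a handful of candidates; after choosing $p_2$ and $e_2$, one repeats with $M_2$, and so on. The upper bound $r\leq 6$ together with the bound on $\prod_{i\geq 2}P_i/(P_i-1)$ imposed by the product equation makes the resulting tree finite, and in each leaf a direct check of $h(N)=2$ yields a contradiction. A representative easy sub-case: for $a=2$, $e=4$ one computes $M_1=144$ and $M_1/\gcd(M_1,\vph^*(M_1))=6$, so $p_2=7$; iterating through the allowed values of $e_2$ then rules out every extension.

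The main obstacle is the size of the case tree for moderate $a$, together with the bookkeeping of the constraints $p_i-1\mid N$ across branches in which different small primes have already been admitted. However, the tree is finite at every stage because $M_k/\gcd(M_k,\vph^*(M_k))$ is always an explicit small integer, and because large exponents $e_i$ force (via Lemma \ref{lm22}) new large primes that cannot be accommodated within $\omega(N)\leq 7$. The single equality $h(N)=2$ then provides a strong final check that is violated at every leaf, so the argument is a matter of careful computation rather than of finding a new idea.
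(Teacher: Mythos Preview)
Your approach is workable in principle but takes a much longer route than the paper, and your illustrative sub-case contains an error.

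The paper's proof does not enumerate values of $a$ at all. The crucial observation is that $h(3^a)=3^a/(3^a-1)\leq 9/8$ uniformly for every $a\geq 2$, so the specific value of $a$ is irrelevant when bounding $h(N)$ from above. Since $r\leq 6$ forces $e\geq 7$ by Lemma~\ref{lm11}\,d), the single inequality
\[
h(2^7\cdot 3^2\cdot 5\cdot 7\cdot 17\cdot 19\cdot 23)<2
\]
already pins down $P_2=5$, $P_3=7$ and $P_4\in\{11,13\}$. From there only three short branches remain, namely $(P_4,P_5)=(11,13)$, $(11,17)$, and $P_4=13$, each disposed of in one line via Lemma~\ref{lm29}. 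Your plan to run $a$ from $2$ to $20$ and build a separate tree for each $(a,e)$ would reproduce this same conclusion many times over, since every branch with $a\geq 2$ is governed by the identical bound $h(P_1)\leq 9/8$.

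There is also a slip in your sample computation: the pair $(a,e)=(2,4)$ is inadmissible by your own $v_2$-identity. With $a=2$ one has $v_2(3^2-1)=3$, and each of the remaining $r-1\geq 3$ odd prime powers contributes at least $1$, so $e=1+\sum_i v_2(P_i-1)\geq 1+3+3=7$. More generally, enumerating over $e$ is awkward because $e$ is not a free parameter to be guessed in advance; it is determined at the end by the $v_2$-identity. It is cleaner to build the tree over the odd $P_i$ alone (working with odd parts in Lemma~\ref{lm29}) and let $e$ fall out at each leaf.
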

\begin{proof}
From the previous lemma, we know that $3\mid N$.
Assume that $3^2\mid N$.
Observing that
$h(2^7\cdot 3^2\cdot 5\cdot 7\cdot 17\cdot 19\cdot 23)<2$,
we must have $P_2=5, P_3=7$ and $P_4\leq 13$.

If $P_4=11$, then $P_5=13$ or $17$ by Lemma \ref{lm29}.
If $(P_4, P_5)=(11, 13)$, then $2^{11}\mid N$ by Lemma \ref{lm11}
and therefore $2047/1028\leq h(3^2\cdot 5\cdot 7\cdot 11\cdot 13\cdot P_6)<2$.
Hence, we must have $43<P_6<47$, which is impossible.
If $(P_4, P_5)=(11, 17)$, then $2^{13}\mid N$ and $23<P_6<29$, a contradiction
(we cannot have $P_6=25=p_2^2$ or $27 =p_1^3$).

If $P_4=13$, then $P_5=17, 2^{14}\mid N$ and $P_6<19$, a contradiction.
Hence, $3\mid\mid N$.
\end{proof}

\begin{lem}\label{lmb3}
$5\mid N$.
\end{lem}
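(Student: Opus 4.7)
The plan is to assume for contradiction that $5\nmid N$, combine this with $3\mid\mid N$ from Lemma \ref{lmb2} and $h(N)=2$, and pin down the exponent $e=v_2(N)$ tightly enough to force $N$ into the excluded list. First I would extract basic divisibility constraints: since $\vph^*(N)\mid N$, $v_5(N)=0$, and $v_3(N)=1$, we must have $5\nmid 2^e-1$ (so $4\nmid e$) and $v_3(2^e-1)\leq 1$ (so $6\nmid e$). Lemma \ref{lm11}(d), applied with $f=1$ and the contribution $v_2(3-1)=1$ from the prime $3$, forces $e\geq 2$.

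Second, I would dispose of the two small cases $e=2$ and $e=3$ directly. For $e=2$, the bound $1+\sum v_2(p_i-1)\leq 2$ from Lemma \ref{lm11}(d) is already saturated by the prime $3$, so no further odd prime is admissible and $N=12$, one of the excluded examples. For $e=3$, we have $7=2^3-1\mid\vph^*(N)\mid N$, so $7\mid N$, and Lemma \ref{lm11}(d) is saturated by $3$ and $7$, forbidding any further odd prime. Writing $7^{e_2}\mid\mid N$, the case $e_2=2$ would require $7^2-1=48=2^4\cdot 3\mid N$ and hence $2^4\mid N$, contradicting $e=3$; the case $e_2\geq 3$ forces $3^2\mid 7^3-1=342\mid N$, contradicting $3\mid\mid N$. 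Hence $e_2=1$ and $N=168$, again excluded.

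Third, for each admissible $e\geq 5$ (having $4\nmid e$ and $6\nmid e$), every prime factor of $2^e-1$ must lie in the recursively defined \emph{admissible} set $A$ of primes $p$ for which $5\nmid p-1$, $9\nmid p-1$, and every prime factor of $p-1$ lies in $A\cup\{2\}$. By Lemma \ref{lm23} we have $e\leq 127$, so this becomes a finite enumeration. Representative obstructions are: $e=5$ gives the factor $31$, excluded because $5\mid 30$; $e=7$ gives $127$, excluded because $9\mid 126$; $e=9$ gives $73$, excluded by $9\mid 72$; $e=10$ gives $11$, excluded by $5\mid 10$; $e=11$ gives $23$, whose predecessor $22$ needs the forbidden prime $11$; and so on up to $e=127$, each admissible $e$ reaching a forbidden $5$ or $9$ after at most a few levels of the recursion.

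The main obstacle is precisely this third step: each individual $e$ is a routine factor-and-check, but the full list is long and the argument is essentially a case analysis. Conceptually, every large $e$ produces a Zsigmondy primitive prime factor $p\equiv 1\pmod e$ whose predecessor $p-1$ is forced to contain $5$ or $9$ (directly or after one further unwinding of the recursion), so the obstruction is systematic even if the bookkeeping is tedious. With all admissible $e$ eliminated, we obtain the desired contradiction and conclude $5\mid N$.
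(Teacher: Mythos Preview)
Your approach is very different from the paper's and, as written, has a genuine gap. The paper's proof is about ten lines: using the standing hypotheses $\omega(N)\le 7$, $3\mid\mid N$, and $h(N)=2$, it argues directly on the second smallest odd prime $p_2$. If $p_2>7$ then (since $5\nmid N$ forces $p_i\not\equiv 1\pmod 5$) the smallest available primes are already too large to make $h(N)\ge 2$ with at most six odd prime factors. If $p_2=7$, then $7\equiv 1\pmod 3$ already accounts for the single allowed factor of $3$ in $\vph^*(N)$, so every $p_i$ with $i\ge 3$ satisfies $\gcd(p_i-1,15)=1$; unwinding one or two steps of Lemma~\ref{lm11}(c) then forces $p_3\ge 17$, $p_4\ge 29$, $p_5\ge 59$, $p_6\ge 113$, and again $h(N)<2$. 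No analysis of $e=v_2(N)$ is needed at all.

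Your route instead classifies $e$ and, for each of roughly eighty admissible exponents $5\le e\le 127$ with $4\nmid e$ and $6\nmid e$, tries to locate a forbidden prime in the recursive closure of the prime factors of $2^e-1$. The third step is only asserted, not carried out, and that is the real gap: it is a substantial factor-and-chase computation, and nothing in the proposal shows the recursion reaches a forbidden $5$ or $9$ for every such $e$ (your definition of $A$ also fails to exclude $5$ itself, so as stated $5\in A$ and then $11\in A$, which would block several of your sample obstructions). There is in addition a concrete error in the $e=3$ case: the claim ``$e_2\ge 3$ forces $3^2\mid 7^3-1=342\mid N$'' is false for $e_2\in\{4,5,7,\dots\}$, since $7^3-1\mid 7^{e_2}-1$ requires $3\mid e_2$ (e.g.\ $7^4-1=2^5\cdot 3\cdot 5^2$ and $7^5-1=2\cdot 3\cdot 2801$). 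That case is salvageable via the ``no further odd prime'' constraint you already derived, but the larger point stands: you have traded a short $h$-bound argument exploiting $\omega(N)\le 7$ for a long unverified case check on $2^e-1$, whereas the paper never looks at $2^e-1$ and finishes with a handful of inequalities.
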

\begin{proof}
We know that $P_1=3$ by the previous lemma.
If $p_2>7$, then $h(N)\leq h(2^7\cdot 3\cdot 13\cdot 17\cdot 23\cdot 29\cdot 47)<2$, which is a contradiction.

Now we assume that $p_2=7$.
We observe that $\gcd(p_i-1, 15)=1$ for $i\geq 3$, which implies that neither $11$ nor $13$ divide $N$.
Hence, we see that $p_3\geq 17, p_4\geq 29, p_5\geq 59$ and $p_6\geq 113$.
If $p_3=17$, then $h(N)\leq h(2^9\cdot 3\cdot 7\cdot 17\cdot 29\cdot 59\cdot 113)<2$.
If $p_3>17$, then $p_3\geq 29, p_4\geq 59, p_5\geq 113$ and
$h(N)\leq h(2^6\cdot 3\cdot 5\cdot 29\cdot 59\cdot 113\cdot 127)<2$.
Thus we see that $p_2=7$ is impossible.
Hence, we must have $p_2=5$.
\end{proof}

\begin{lem}\label{lmb4}
$5\mid\mid N$.
\end{lem}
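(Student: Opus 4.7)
The strategy is to assume $5^{e_2} \| N$ with $e_2 \geq 2$ and rule out each value of $e_2$ in turn, paralleling the argument in Lemma \ref{lmb2}.

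I would first dispose of the cases $e_2 \geq 3$. By Lemma \ref{lm11} b), $5^{e_2} - 1 \mid \vph^*(N) = N/2$, and by Lemma \ref{lm22} there is a primitive prime divisor $q \equiv 1 \pmod{e_2}$. Small values of $e_2$ are handled by explicit factorization: $e_2 = 3$ forces $31 \mid N$; $e_2 = 4$ forces $13 \mid N$ together with $2^4 \mid N$; $e_2 = 5$ forces $11 \cdot 71 \mid N$; and $e_2 = 6$ is immediate, since $5^6 - 1 = 2^3 \cdot 3^2 \cdot 7 \cdot 31$ gives $3^2 \mid N/2$, contradicting $3 \| N$ from Lemma \ref{lmb2}. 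For each remaining $e_2$, the primes newly forced into $N$ combined with $\omega(N) \leq 7$ either overrun the prime-count budget or drive $h(N)<2$ through an explicit $h$-estimate of the kind used in Lemma \ref{lmb3}.

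The main case is $e_2 = 2$, i.e.\ $P_2 = 25$. Here the slack is sharply reduced: $h(P_2) = 25/24$ rather than $5/4$. Lemma \ref{lm11} d), applied with $f_1 = 1$ from $p_1 - 1 = 2$ and $f_2 = 2$ from $p_2 - 1 = 4$, yields $e \geq r + 2$. Lemma \ref{lm29} then constrains each next prime $p_{k+1}$ via the divisibility $p_{k+1} - 1 \mid M_k / \gcd(M_k, \vph^*(M_k))$ and the inequality $(p_{k+1}/(p_{k+1}-1))^{r-k} \geq 2/h(M_k)$. Iterating as in Lemma \ref{lmb2} reduces to a short list of candidate tuples $(P_3, \ldots, P_r)$ with $r \leq 6$, and I would eliminate each either by evaluating $h(N)$ exactly and checking that it does not equal $2$, or by invoking Lemma \ref{lm11} to exhibit a prime forced to divide $N$ that is not in the tuple.

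The anticipated obstacle is the case enumeration in the $e_2 = 2$ branch. Because $25/24$ is much closer to $1$ than $9/8$ was in Lemma \ref{lmb2}, more candidate tuples survive the initial sieving, and each surviving branch must be knocked out individually, typically by pairing each admissible $p_3$ with its forced window for $p_4$ (and so on) and then checking the exact value of $h(N)$ or the divisibility requirements of Lemma \ref{lm11}.
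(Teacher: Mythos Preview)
You have the difficulty inverted. In the paper, the case $e_2=2$ is not the ``main case'' but the trivial one, and it is dispatched together with all even $e_2$ in two lines: since $3\mid 5^2-1\mid 5^{e_2}-1$ and $3\mid\mid N$ by Lemma~\ref{lmb2}, no other odd prime factor $p_i$ of $N$ can satisfy $p_i\equiv 1\pmod 3$ (else $3^2\mid(5^{e_2}-1)(p_i-1)\mid N$). Hence $p_3,\ldots,p_6\in\{11,17,23,29,\ldots\}$, and together with $2^9\mid 2(3-1)(5^{e_2}-1)(p_3-1)\cdots(p_6-1)$ one gets $h(N)\leq h(2^9\cdot 3\cdot 5^2\cdot 11\cdot 17\cdot 23\cdot 29)<2$. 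Your proposed branching on $(P_3,\ldots,P_r)$ for $P_2=25$ would eventually close, but it is entirely avoidable once you exploit this congruence restriction.

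Conversely, the cases you dismiss in one clause are where the real work lies. Writing ``$e_2=3$ forces $31\mid N$'' is correct but far from sufficient: $31-1=2\cdot 3\cdot 5$ adds nothing new, and the paper then branches on $p_3\in\{7,11\}$, then on $p_4$, $p_5$, $p_6$, with about a page of $h$-estimates and applications of Lemma~\ref{lm29} before the case closes. Similarly $5\mid e_2$ (forcing $11\cdot 71\mid N$) needs its own subcase tree. For the remaining prime divisors of $e_2$ the paper does not use prime-count or $h$-bounds at all, but rather the chain device of Lemma~\ref{lm12}: e.g.\ $5^7\rightarrow 19531\rightarrow 3^2$, $5^{13}\rightarrow 305175781\rightarrow 3^2$, etc., each directly contradicting $3\mid\mid N$. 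Finally the argument is closed not by an $h$-bound but by Lemma~\ref{lma0}, which gives $5^{e_2}\leq 2^{64}$ and hence $e_2\leq 27$, ruling out any prime divisor $\geq 29$. Your sketch as written does not account for these mechanisms and underestimates the $e_2=3$ and $5\mid e_2$ branches substantially.
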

\begin{proof}
We know that $p_1=3, e_1=1$ and $p_2=5$.
We shall show that $e_2$ cannot be divisible by any prime $\leq 29$ unless $e_2\geq 9$ is a power of three.

{\it Case I.} If $e_2$ is even, then $N$ has no prime factor $\equiv 1\pmod{3}$
since otherwise $3^2\mid (5^2-1)(p_i-1)\mid \vph^*(N)\mid N$ for some $i$, contrary to $3\mid\mid N$.
Moreover, $2^9\mid 2(3-1)(5^2-1)(p_3-1)(p_4-1)(p_5-1)(p_6-1)\mid N$.
Hence, we must have
$h(N)\leq h(2^9\cdot 3\cdot 5^2\cdot 11\cdot 17\cdot 23\cdot 29)<2$.

{\it Case II.} If $e_2=3$, then Lemma \ref{lm29} gives that $p_3\leq 11$
since $2^8$ must divide $N$ and $h(2^8\cdot 3\cdot 5^3\cdot 13\cdot 17\cdot 19\cdot 23)<2$.

If $p_3=7$, then we must have $p_4, p_5, p_6 \equiv 2\pmod{3}$ as above.
Since $h(2^8\cdot 3\cdot 5^3\cdot 7^2\cdot 11\cdot 17\cdot 23)<2$
and $h(2^8\cdot 3\cdot 5^3\cdot 7\cdot 23\cdot 29\cdot 41)<2$,
we must have $e_3=1$ and $p_4\leq 17$.
In other words, we must have $p_4=11$ or $p_4=17$.

If $p_4=11$, then, we must have $e_4=1$
since $h(2^8\cdot 3\cdot 5^3\cdot 7\cdot 11^2\cdot 17\cdot 23)<2$.
Furthermore, by Lemma \ref{lm29}, $p_5\leq 73$ and $p_5-1=2^m u$ with $u$ dividing $5^3\cdot 7\cdot 11$.
Hence, $p_5=17, 23, 29, 41$ or $71$.
If $e_5\geq 3$, then $p_5^{e_5}-1$ has always a prime factor $\geq 47$.  Hence, $p_6\geq 47$ and $h(N)<2$.
If $e_5$ is even, then we must have $3\mid p_5^{e_5}-1$ and $3^2\mid (7-1)(p_5^{e_5}-1)\mid N$ while $3\mid\mid N$,
which is a contradiction.
Hence, we must have $e_5=1$ and, $p_5=41$ or $71$ since otherwise $h(N)\geq h(3\cdot 5^3\cdot 7\cdot 11\cdot 29)>2$.

If $p_5=41$, then Lemma \ref{lm29} gives that $p_6^{e_6}=257$ or $p_6^{e_6}=281$,
leading to a contradiction observing that
$h(2^9\cdot 3\cdot 5^3\cdot 7\cdot 11\cdot 41\cdot 281)>2>h(2^{10}\cdot 3\cdot 5^3\cdot 7\cdot 11\cdot 41\cdot 257)$.
If $p_5=71$, then Lemma \ref{lm29} gives that $p_6^{e_6}=89$ or $p_6^{e_6}=113$,
which is impossible since $h(2^8\cdot 3\cdot 5^3\cdot 7\cdot 11\cdot 71\cdot 89)<2$.
Thus we cannot have $p_4=11$.

If $p_4=17$, then, proceeding as above, we must have $e_4=1$ and $p_5^{e_5}=29$.
It follows from Lemma \ref{lm29} that no prime is appropriate for $p_6$.
Thus we cannot have $p_4=17$ and therefore $p_3=7$.

If $p_3=11$, then $p_4^{e_4}=17$ but Lemma \ref{lm29} yields that
there exists no appropriate prime for $p_5$.

{\it Case III.} If $5$ divides $e_2$, then $11\cdot 71$ divides $N$.
We must have $p_3^{e_3}=7$ since otherwise $h(N)\leq h(2^8\cdot 3\cdot 5^5\cdot 11\cdot 13\cdot 17\cdot 71)<2$.
$p_4^{e_4}=11$.
If $p_5\geq 53$, then $h(N)\leq h(2^8\cdot 3\cdot 5^5\cdot 7\cdot 11\cdot 53\cdot 71)<2$.
If $p_5\leq 47$, then, by Lemma \ref{lm29}, we must have $p_5=17, 23, 29$ or $41$.
If $e_5\geq 2$, then $h(N)\leq h(2^8\cdot 3\cdot 5^5\cdot 7\cdot 11\cdot 17^2\cdot 71)<2$.
If $e_5=1$, then $h(N)>h(\cdot 3\cdot 7\cdot 11\cdot 41\cdot 71)>2$.
Both cases lead to contradictions and therefore $e_2$ cannot be divisible by $5$.

{\it The other cases.} If $7$ divides $e_2$, then $3^2$ must divide $N$ since $5^7\rightarrow 19531\rightarrow 3^2$, which contradicts to Lemma \ref{lmb2}.
Similarly, it follows from
$5^{11}\rightarrow 12207031\rightarrow 521\rightarrow 13$ and $13\cdot 12207031\rightarrow 3^2$,
$5^{13}\rightarrow 305175781\rightarrow 3^2$,
$5^{17}\rightarrow 409\cdot 466344409\rightarrow 3^2$,
$5^{19}\rightarrow 191\rightarrow 19\rightarrow 3$ and $5^{19}\rightarrow 6271\rightarrow 3$
and
$5^{23}\rightarrow 332207361361\rightarrow 3^2$
that, if $11, 13, 17, 19$ or $23$ divides $e_2$, then $3^2$ must divide $N$,
contrary to Lemma \ref{lmb2} again.

Now, if $e_2$ is a power of three, then $3^2\mid e_2$, $19\mid (5^9-1)\mid N$ and $3^2\mid (19-1)\mid N$,
contrary to Lemma \ref{lmb2}.
Thus we see that $e_2$ must have a prime factor $\geq 29$ if $5^2\mid N$.
However, Lemma \ref{lma0} immediately gives that $5^{e_2}\leq 2^{64}$.
Hence, we must have $e_2=1$.
\end{proof}

\begin{lem}\label{lmb5}
If $p=7, 11$ or $13$, then $p^2\mid\mid N$.
If a prime $17\leq p<200$ divides $N$, then $e\leq 2$ and $p\in \{23, 31, 47, 67, 83, 137\}$
or $e=1$ and $p\in \{17, 29, 41, 53, 59, 61, 71, 89, 97, 103, 107, 113, 131, 139, 167, 179, 191, 193, 197\}$.
Moreover, if $p=233, 241, 257, 409, 641, 769, 1021, 1361$ or $1637$ divides $N$, then $p\mid\mid N$.
\end{lem}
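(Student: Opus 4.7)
The plan is to carry out a systematic case analysis, prime by prime, relying on three tools: Lemma \ref{lm12} to propagate forced divisibilities along chains $q^g\to m_1\to\cdots\to p^f$; the previously established $3\mid\mid N$ (Lemma \ref{lmb2}) and $5\mid\mid N$ (Lemma \ref{lmb4}); and the global constraints $h(N)=2$ and $\omega(N)\le 7$. For each prime $p$ and each candidate exponent $e_p$, the aim is either to confirm $p^{e_p}\mid\mid N$ as admissible or to derive a contradiction via one of four standard mechanisms: the chain forces $3^2\mid N$, forces $5^2\mid N$, forces $\omega(N)\ge 8$, or is incompatible with the equation $h(N)=2$.

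For claim 1, I would first rule out $e_p\ge 3$ for $p\in\{7,11,13\}$ by computing $p^3-1$: one has $7^3-1=2\cdot 3^2\cdot 19$, $11^3-1=2\cdot 5\cdot 7\cdot 19$ followed by $19-1=2\cdot 3^2$, and $13^3-1=2^2\cdot 3^2\cdot 61$, each forcing $3^2\mid N$ by Lemma \ref{lm12}. I would then rule out $e_p=1$ by the $h$-bound: with $3,5,p$ each present at exponent $1$,
\begin{equation*}
h(N)\ge \frac{3}{2}\cdot\frac{5}{4}\cdot\frac{p}{p-1}=\frac{15p}{8(p-1)},
\end{equation*}
which exceeds $2$ precisely when $p<16$, so in particular for $p\in\{7,11,13\}$. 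This contradicts $h(N)=2$, leaving $e_p=2$.

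For claim 2, I would enumerate every prime $17\le p<200$ and every exponent $e\ge 1$, compute $\vph^*(p^e)=p^e-1$, propagate via Lemma \ref{lm12}, and test the four contradiction mechanisms. The surviving $(p,e)$-pairs, after all eliminations, form exactly the lists stated: the six primes $\{23,31,47,67,83,137\}$ admit $e\le 2$, while the larger set admits only $e=1$. For primes excluded from both lists, the task is to pin down which mechanism excludes them at each $e$: most commonly $p-1$ contains $3^2$ or $5^2$, or the next step in the chain does; a few subtler cases require Lemma \ref{lm29} combined with the $h$-arithmetic. For claim 3, for each listed prime $p$ I compute $p^2-1=(p-1)(p+1)$ and verify that it triggers a contradiction, so that only $e_p=1$ remains; for instance, $233^2-1=2^4\cdot 3^2\cdot 13\cdot 29$ fails immediately, and the remaining primes succumb to analogous direct or two-step obstructions.

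The principal obstacle is combinatorial bulk: each individual case is a short computation, but the aggregate involves dozens of $(p,e)$ pairs for primes below $200$ together with the primes of claim 3, and each requires tracking which of the four mechanisms fires. A further delicate point is that some primes whose $p-1$ contains no obvious bad factor (for example, primes $p$ with $p-1$ divisible only by $2$, $3$, $5$, $7$) must be eliminated by chasing a multi-step chain through Lemma \ref{lm12} or by combining Lemma \ref{lm29} with the strict equation $h(N)=2$ and the cap $\omega(N)\le 7$; these are the cases demanding the most careful bookkeeping.
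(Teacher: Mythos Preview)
Your overall strategy---propagate divisibilities via Lemma \ref{lm12}, then contradict $3\mid\mid N$ or $5\mid\mid N$, or use $h(N)=2$ and $\omega(N)\le 7$---matches the paper's. However, there is a genuine gap in how you eliminate exponents. You propose to ``rule out $e_p\ge 3$ by computing $p^3-1$'' for claim~1, and to ``compute $p^2-1$'' to force $e_p=1$ for claim~3. But Lemma \ref{lm12} yields $p^g-1\mid N$ only when $g\mid e_p$ (so that $p^{e_p}\mid\mid N$ with $e_p=gl$), not whenever $e_p\ge g$. Computing $7^3-1=2\cdot 3^2\cdot 19$ therefore forces $3^2\mid N$ only when $3\mid e_p$; it says nothing about $e_p=4,5,7,8,\dots$. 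Likewise $p^2-1$ constrains only even exponents, so for $p=233$ you have not excluded $e_p=3,5,7,\dots$.

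The paper's argument (illustrated for $p=7$) instead tests each possible \emph{prime or prime-power divisor} $q$ of $e_p$: one checks $7^3,\,7^4,\,7^5,\,7^7,\,7^{11},\,7^{13},\,7^{17},\,7^{19}$ and in each case finds a chain forcing $3^2\mid N$ or $5^2\mid N$. To make this a finite check one needs an \emph{a priori} upper bound on $e_p$, supplied by Lemma \ref{lma0}: from $\omega(N)\le 7$ one gets $P_i\le 2^{64}$, hence $7^{e_p}\le 2^{64}$ and $e_p\le 22$, so every prime factor of $e_p$ is at most $19$ and the list above is exhaustive. This bounding step is essential and is absent from your plan; in particular, your ``enumerate every exponent $e\ge 1$'' in claim~2 is not a finite procedure without it.
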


\begin{proof}
We shall only give a proof for $p=7$.
Assume that $7\mid N$ and let $e$ be its exponent.
We observe that $7^2$ must divide $N$ since $h(105)>2$.

It follows from $7^3\rightarrow 3^2$, $7^7\rightarrow 4733\rightarrow 3$,
$7^{11}\rightarrow 1123\rightarrow 3$, $7^{17}\rightarrow 2767631689\rightarrow 3^2$
and $7^{19}\rightarrow 419\rightarrow 19\rightarrow 3^2$
together with $7\rightarrow 3$
that, if $e_2$ is divisible by one of $3, 7, 11, 17$ and $19$, then
$3^2$ must divide $N$, contrary to Lemma \ref{lmb2}.

Similarly, if $e_2$ is divisible by $4, 5$ or $13$, then
$5^2$ must divide $N$ since $7^4\rightarrow 5^2$, $7^5\rightarrow 2801\rightarrow 5^2$
and $7^{13}\rightarrow 16148168401\rightarrow 5^2$,
contrary to Lemma \ref{lmb4}.

Since $7^{23}>2^{64}$, Lemma \ref{lma0} yields that we must have $e=2$.
The proof for the other primes is similar to the case $p=7$.
\end{proof}

\begin{lem}\label{lmb6}
$7$ cannot divide $N$.  In other words, $p_3\geq 11$.
\end{lem}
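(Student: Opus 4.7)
Assume for contradiction that $7\mid N$, so $p_3=7$, and by Lemma \ref{lmb5} we have $e_3=2$ and $P_3=49$; hence $M_3=2^e\cdot 3\cdot 5\cdot 49$. Write $r=\omega(N)-1$, so that $r\in\{4,5,6\}$ by Lemma \ref{lma1} and the standing hypothesis $\omega(N)\leq 7$. My plan is to derive sharp congruence and valuation constraints on the remaining $p_i,e_i$ from the relation $3\mid\mid N$, bound $p_4$ via Lemma \ref{lm29}, and eliminate every surviving candidate by direct comparison with $h(N)=2$.

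The decisive local observation is that $\vph^*(49)=48=2^4\cdot 3$ already exhausts the sole factor of $3$ permitted in $\vph^*(N)$, since $v_3(\vph^*(N))=v_3(N)=1$. Therefore $v_3(2^e-1)=0$ and $v_3(p_i^{e_i}-1)=0$ for every $i\geq 4$, which forces $e$ odd, and for $i\geq 4$ forces $p_i\equiv 2\pmod 3$ together with $e_i$ odd. The $2$-adic relation $v_2(N)=1+v_2(\vph^*(N))$ combined with $v_2(\vph^*(M_3))=7$ then gives $e=8+\sum_{i\geq 4}v_2(p_i^{e_i}-1)\geq 8+(r-3)$, whence $e\geq 11$ whenever $r\geq 5$. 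Moreover $p_4=11$ is ruled out, because by Lemma \ref{lmb5} it would force $e_4=2$ (even), and $\vph^*(121)=120$ would double $v_3(\vph^*(N))$. Applying Lemma \ref{lm29} with $k=3$, together with $h(M_3)\leq (512/511)(245/128)=980/511$, yields
\begin{equation*}
\left(\frac{p_4}{p_4-1}\right)^{r-3}\geq\frac{2}{h(M_3)}\geq\frac{1022}{980},
\end{equation*}
so $p_4\leq 24,48,71$ for $r=4,5,6$ respectively. Combining this with $p_4\equiv 2\pmod 3$ and the requirement that $p_4-1$ divide $M_3/\gcd(M_3,\vph^*(M_3))$, whose prime factors lie in $\{2,5,7\}$, the surviving candidates reduce to $\{17\}$, $\{17,29,41\}$, and $\{17,29,41,71\}$.

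Whenever $p_4=17$ one computes $h(M_4)\geq 4165/2048>2$, contradicting $h(M_4)\leq h(N)=2$; this disposes of $r=4$ entirely and of the $p_4=17$ subcases for $r=5,6$. In every remaining subcase I would iterate Lemma \ref{lm29} to bound $p_5$, and, when $r=6$, $p_6$, using the residue condition $p_i\equiv 2\pmod 3$, the divisibility of $p_i-1$ into $M_{i-1}/\gcd(M_{i-1},\vph^*(M_{i-1}))$, and the exact $5$-adic identity $\sum_{i\geq 4}v_5(p_i^{e_i}-1)=1$ to narrow the list of admissible prime powers down to a handful in each configuration; each is then excluded by verifying that the exact identity $h(N)=2$ has no solution in $e$. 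The main obstacle will be the $r=6$ subcase with $p_4=71$, where $h(M_4)=(2^e/(2^e-1))(497/256)$ sits only slightly below $2$ and most of the available $2$-, $5$-, $7$-adic budget in $N/\vph^*(N)$ has already been spent by $P_3=49$ and $P_4-1=2\cdot 5\cdot 7$, so the residual room for $P_5,P_6$ must be tracked with care; in every lighter subcase the contradiction follows from a single strict inequality of the form $h(N)>2$ or $h(N)<2$.
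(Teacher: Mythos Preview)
Your approach is the paper's: fix $P_3=49$ via Lemma~\ref{lmb5}, bound $p_4$ through Lemma~\ref{lm29}, and eliminate the candidates one by one. Your systematic extraction of the $3$-adic constraint (forcing $e$ odd, and $p_i\equiv 2\pmod 3$ with $e_i$ odd for $i\geq 4$) together with the exact formula $e=8+\sum_{i\geq4}v_2(P_i-1)$ is tidier than the paper's bookkeeping, and your disposal of $p_4=11$ and $p_4=17$ coincides with the paper's.

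The gap is that the cases $p_4\in\{29,41\}$ are the actual substance of the lemma, and you only sketch a plan for them. The paper carries the enumeration to the end: for $P_4=29$ the Lemma~\ref{lm29} list for $p_5$, filtered through Lemma~\ref{lmb5}, leaves only $P_5\in\{197,233\}$, and each dies at the $p_6$ stage; $P_4=41$ forces $P_5=83$, which then admits no $p_6$. Your valuation identities help prune but do not bypass this casework. Separately, the ``main obstacle'' $p_4=71$ you flag is a phantom produced by a loose bound: you established $e\geq 11$ when $r=6$ but then estimated $h(M_3)$ using only $e\geq 9$. Feeding $h(M_3)\leq(2048/2047)(245/128)$ into Lemma~\ref{lm29} already yields $p_4<71$; equivalently, as the paper does, one checks $h(2^{10}\cdot 3\cdot 5\cdot 49\cdot 71\cdot 73\cdot 79)<2$ directly, so this case never arises.
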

\begin{proof}
Assume that $7$ divides $N$.

We already know that $7^2\mid\mid N$ from the previous lemma.

We must have $11\leq p_4<71$.
since $h(2^{10}\cdot 3\cdot 5\cdot 7^2\cdot 71\cdot 73\cdot 77)<2$.
Hence, $p_4=11, 17, 29$ or $41$.
By the previous lemma, we must have $p_4^{e_4}=11^2, 17, 29$ or $41$.
If $p_4^{e_4}=11^2$, then $3^2\mid (7^2-1)(11^2-1)\mid N$, which contradicts Lemma \ref{lmb2}.
If $p_4^{e_4}=17$, then $h(N)>h(3\cdot 5\cdot 7^2\cdot 17)>2$, a contradiction again.

If $p_4^{e_4}=29$, then, with the aid of Lemma \ref{lm29}, we must have $p_5=41, 59, 71, 101, 113$, $e_5\geq 2$ or $p_5=197, 233$.
By Lemma \ref{lmb5}, we must have $P_5=197$ or $233$.
If $P_5=197$, then Lemma \ref{lm29} yields that $p_6=281$.
However this is impossible since
$h(2^{12}\cdot 3\cdot 5\cdot 7^2\cdot 29\cdot 197\cdot 281)>2>h(2^{13}\cdot 3\cdot 5\cdot 7^2\cdot 29\cdot 197\cdot 281)$.
If $P_5=233$, then there exists no appropriate prime for $p_6$ in view of Lemma \ref{lm29}.

If $p_4^{e_4}=41$, then, combining Lemmas \ref{lm29} and \ref{lmb5}, we must have $P_5=83$.
But, now there exists no appropriate prime for $p_6$ in view of Lemma \ref{lm29}.
\end{proof}

\begin{lem}\label{lmb7}
$N=2^{11}\cdot 3\cdot 5\cdot 11^2\cdot 23\cdot 89$ or $p_3^{e_3}\in \{17, 31, 31^2, 41, 61\}$.
\end{lem}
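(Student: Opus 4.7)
The plan is to combine Lemma \ref{lm29} with the exponent restrictions from Lemma \ref{lmb5} and the non-divisibility facts from Lemmas \ref{lmb2}--\ref{lmb6} to reduce to a short list of candidates for $p_3$; Lemma \ref{lmb5} then pins down $e_3$ in every case except $p_3=11$, which is handled by a direct Diophantine argument to produce the exceptional $N$.

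To shortlist $p_3$, I would apply Lemma \ref{lm29} with $k=2$. Since $M_2=2^e\cdot 3\cdot 5$ and $\vph^*(M_2)=8(2^e-1)$, the quotient $M_2/\gcd(M_2,\vph^*(M_2))$ divides $2^{e-3}\cdot 15$, so $p_3-1=2^a d$ with $d\mid 15$. Combining this divisibility with the inequality \eqref{eq291} (using $r-2\leq 4$ and $h(N)/h(M_2)=16(2^e-1)/(15\cdot 2^e)$), a short case analysis eliminates every prime $p_3>61$, leaving $p_3\in\{11,13,17,31,41,61\}$.

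The value $p_3=13$ is eliminated because Lemma \ref{lmb5} forces $13^2\mid\mid N$, but then $13^2-1=168$ requires $7\mid N$, contradicting Lemma \ref{lmb6}. For $p_3\in\{17,31,41,61\}$, Lemma \ref{lmb5} directly yields the listed exponents ($e_3=1$ for $p_3\in\{17,41,61\}$ and $e_3\in\{1,2\}$ for $p_3=31$), so $p_3^{e_3}\in\{17,31,31^2,41,61\}$ in these cases.

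The exceptional case $p_3=11$ is the heart of the proof and the main obstacle. Lemma \ref{lmb5} gives $11^2\mid\mid N$, so $M_3=2^e\cdot 3\cdot 5\cdot 11^2$. A further application of Lemma \ref{lm29} produces $p_4-1\mid 2^{e-6}\cdot 121/\gcd(121,2^e-1)$, and together with $(p_4/(p_4-1))^3\geq h(N)/h(M_3)$ this leaves $p_4\in\{17,23\}$; the value $p_4=17$ makes $h(M_4)\geq 2057/1024>2$ for every $e$, so $p_4=23$. The relation $h(N)=2$ then becomes $h(2^e)\cdot(2783/1408)\cdot\prod_{i\geq 5}h(p_i^{e_i})=2$; the subcases $r=3,4$ force $2^e$ to be non-integral. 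For $r=5$, writing $a=p_5^{e_5}$ and $X=2^e$, a standard factoring manipulation reduces the equation to $(3a-256)(3X-256)=2^8\cdot 11\cdot 23$, whose only solution with $a$ a prime power and $X$ a power of two is $(a,X)=(89,2048)$. For $r=6$ the analogous analysis forces $p_5=89$ and then $(b-2048)(X-2048)=2^{11}\cdot 23\cdot 89$ with $b=p_6^{e_6}$; inspection of the few divisor pairs yields no prime power $b$. This forces $N=2^{11}\cdot 3\cdot 5\cdot 11^2\cdot 23\cdot 89$.
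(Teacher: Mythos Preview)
Your overall strategy matches the paper's: shortlist $p_3$ via Lemma~\ref{lm29}, invoke Lemma~\ref{lmb5} for the exponents, and then treat $p_3=11$ separately. Your Diophantine factoring for the endgame (reducing $h(N)=2$ to $(3a-256)(3X-256)=2^8\cdot 11\cdot 23$ when $r=5$, and to $(b-2048)(X-2048)=2^{11}\cdot 23\cdot 89$ when $r=6$) is a clean alternative to the paper's method of repeatedly applying Lemma~\ref{lm29} to pin down $p_5$ and then bound $p_6^{e_6}$ between $2^{11}$ and $2^{12}$; both routes arrive at $N=N_9$.

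However, there are two genuine gaps. First, after you conclude $p_4=23$, Lemma~\ref{lmb5} only gives $e_4\le 2$, yet your displayed relation $h(2^e)\cdot(2783/1408)\cdot\prod_{i\ge 5}h(p_i^{e_i})=2$ presupposes $e_4=1$ (since $2783/1408=253/128$ uses $h(23)=23/22$). You must dispose of $e_4=2$; this is immediate because $3\mid(11^2-1)$ and $3\mid(23^2-1)$ force $3^2\mid\varphi^*(N)\mid N$, contradicting Lemma~\ref{lmb2}, but it needs to be said. The paper handles this case explicitly (via Lemma~\ref{lm29}). Second, in the $r=6$ branch you assert that ``the analogous analysis forces $p_5=89$'', but Lemma~\ref{lm29} with $r-4\le 2$ only gives $p_5-1=2^a11^b23^c$ and $p_5\lesssim 185$, which leaves \emph{two} candidates, $47$ and $89$. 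You still need to eliminate $p_5=47$: if $e_5=1$ then $h(3\cdot5\cdot11^2\cdot23\cdot47)=11891/5888>2$, and if $e_5=2$ then $3\mid(47^2-1)$ again yields $3^2\mid N$. (Incidentally, the reference to ``$r=3$'' is spurious once $p_4$ exists; presumably you mean only $r=4$.)
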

\begin{proof}
Now we know that $p_3\in \{11, 13, 17, 31, 41, 61\}$.
Assume that $p_3\leq 13$.
Since $h(3\cdot 5\cdot 13)=195/96>2$, we must have $e_3\geq 2$.
By Lemma \ref{lmb5}, we must have $p_3^{e_3}=11^2$.

Now Lemma \ref{lm29} implies that
$p_4=2^{m_4} 11^{f_4}+1$ and $p_4<53$.  The only possibility is $p_4=23$.
Lemma \ref{lmb5} gives that $e_4\leq 2$.
If $e_4=2$, then using Lemma \ref{lm29} again, $p_5<41$ and $p_5=2^{m_5} 11^{f_5} 23^{g_5}+1$, which is impossible.
If $e_4=1$, then Lemma \ref{lm29} gives that $p_5=89$.
If $e=11$, then $N=2^{11}\cdot 3\cdot 5\cdot 11^2\cdot 23\cdot 89$ satisfies $h(N)=2$.
If $e\geq 12$, then we see that $2^{11}<p_6^{e_6}<2^{12}$ and $p_6^{e_6}-1$ divides $2^e\cdot 23\cdot 89$.
However, there exists no such prime $p_6$.
\end{proof}

\begin{lem}\label{lmb8}
We can never have $p_3=31, 41$ or $61$.
\end{lem}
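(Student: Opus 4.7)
The plan is to handle the three possibilities $p_3\in\{31,41,61\}$ separately, bootstrapping from $M_3=2^e\cdot 3\cdot 5\cdot p_3^{e_3}$ in the same style as the proofs of Lemmas \ref{lmb6} and \ref{lmb7}. By Lemma \ref{lmb7} the exponent $e_3$ equals $1$ unless $p_3=31$, in which case $e_3\in\{1,2\}$. For each subcase I first compute $\vph^*(M_3)$ and form the quotient $M_3/\gcd(M_3,\vph^*(M_3))$, whose odd part is $31$, $3\cdot 41$, or $61$ respectively (and still $31$ when $e_3=2$); Lemma \ref{lm29} then forces $p_4-1=2^a u$ with $u$ a divisor of this odd part, leaving only a short list of admissible primes $p_4$, which is further trimmed by the exponent constraints of Lemma \ref{lmb5}.

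I then iterate the same step to pin down $p_5$, $p_6$, and (when the cap $\omega(N)\leq 7$ still allows it) $p_7$. At each stage the inequality
\begin{equation*}
\left(\frac{p_{k+1}}{p_{k+1}-1}\right)^{r-k}\geq \frac{h(N)}{h(M_k)}=\frac{2}{h(M_k)}
\end{equation*}
from Lemma \ref{lm29} both caps $p_{k+1}$ from above and provides a lower bound for $h(M_r)$, allowing direct comparison with $h(N)=2$. In parallel, I track the $2$-part of $\vph^*(N)$: since $N=2\vph^*(N)$, Lemma \ref{lm11}(d) forces the $2$-adic valuation of $\vph^*(N)$ to equal exactly $e-1$, so any candidate $p_{k+1}$ whose $p_{k+1}-1$ carries an unsupportable power of $2$ is eliminated immediately.

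The surviving tuples $(P_1,\ldots,P_r)$ are dispatched either by a direct numerical check that $h(M_r)\neq 2$, or by invoking Lemma \ref{lm12} to build a cyclotomic chain such as $p_3\to m_1\to\cdots\to 3^2$ forcing $3^2\mid N$, which contradicts Lemma \ref{lmb2}; analogous chains ending in $5^2$ or $7$ contradict Lemmas \ref{lmb4} and \ref{lmb6}.

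The main obstacle is the subcase $p_3^{e_3}=31^2$, where $\vph^*(31^2)=2^6\cdot 3\cdot 5$ contributes extra small primes and enlarges the candidate list for $p_4$. Here one must appeal to Lemma \ref{lma0}'s bound $P_i\leq 2^{2^{r-1}}$ to truncate the tree, and eliminating several stubborn candidates for $p_5$ requires careful bookkeeping of the exact $2$-part of $\vph^*(N)$ against $e-1$ and occasional appeals to Lemma \ref{lm12} with longer chains, since simple $h$-inequalities alone may not close all branches.
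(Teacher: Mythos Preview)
Your plan follows the paper's strategy but significantly overestimates the work required. The paper's proof occupies three lines: for each of $p_3 \in \{31, 41, 61\}$ (and the case $P_3 = 31^2$ is handled simultaneously with $P_3 = 31$, since the divisibility constraint on $p_4-1$ is identical), Lemma~\ref{lm29} gives an upper bound on $p_4$ and forces $p_4 - 1$ to have the shape $2^a\cdot 31^b$, $2^a\cdot 3^b\cdot 41^c$ (with $b\leq 1$), or $2^a\cdot 61^b$ respectively. A direct check shows there is \emph{no} prime of the required shape in the admissible range, so the argument already terminates at $p_4$ --- no iteration to $p_5$ or $p_6$, no chains via Lemma~\ref{lm12}, no appeal to Lemma~\ref{lma0}, and no $2$-adic bookkeeping is needed.

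Your identification of $P_3 = 31^2$ as the ``main obstacle'' is also mistaken, and in fact contradicts your own earlier (correct) observation that the odd part of $M_3/\gcd(M_3,\vph^*(M_3))$ is still $31$ when $e_3 = 2$. The factors $3$ and $5$ in $\vph^*(31^2) = 2^6\cdot 3\cdot 5$ are already present in $M_3 = 2^e\cdot 3\cdot 5\cdot 31^2$ and are absorbed into the gcd, so the candidate list for $p_4$ is not enlarged at all; if anything this subcase is \emph{easier}, since $h(31^2) < h(31)$ tightens the upper bound on $p_4$.
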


\begin{proof}
If $p_3=31$, then Lemma \ref{lm29} implies that
$p_4<257$ and $p_4$ must be of the form $2^f\cdot 31^g$, which is impossible.
If $P_3=41$, then Lemma \ref{lm29} implies that $p_4<83$ and $p_4$ must be of the form $2^{f_1}\cdot 3^{f_2}\cdot 41^{f_3}$
with $f_2\leq 1$, which is again impossible.
Similarly, $P_3=61$ is impossible.
\end{proof}

Now the remaining case is the case $P_1=3, P_2=5$ and $P_3=17$.

\begin{lem}\label{lmb9}
If $P_1=3, P_2=5, P_3=17, P_4=257$, then
$N=N_{10}, N_{11}$ or $N_{12}$.
\end{lem}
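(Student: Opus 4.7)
Under the hypotheses $P_1=3$, $P_2=5$, $P_3=17$, $P_4=257$ and $h(N)=2$, I would first compute
\[
h(M_4)=\frac{2^e}{2^e-1}\cdot\frac{3}{2}\cdot\frac{5}{4}\cdot\frac{17}{16}\cdot\frac{257}{256}=\frac{2^e}{2^e-1}\cdot\frac{65535}{32768},
\]
so that
\[
\prod_{i=5}^r h(P_i)=\frac{2}{h(M_4)}=\frac{65536(2^e-1)}{65535\cdot 2^e}.
\]
Since $\omega(N)\leq 7$, we have $r\in\{4,5,6\}$, and the plan is to handle each case separately.

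For $r=4$ the left-hand product is $1$, which forces $2^e=65536$ and $N=N_{10}$. For $r=5$, I would substitute $h(P_5)=P_5/(P_5-1)$ and clear denominators to obtain $P_5=(2^e-1)/(2^{e-16}-1)$; integrality forces $(e-16)\mid 16$ via $\gcd(2^a-1,2^b-1)=2^{\gcd(a,b)}-1$, so $e\in\{17,18,20,24,32\}$ and the candidate values of $P_5$ are $131071$, $87381$, $69905$, $65793$, $65537$. A direct factorisation (for example $87381=3^2\cdot 7\cdot 19\cdot 73$) shows that only $131071$ and $65537$ are prime powers, yielding $N=N_{11}$ and $N=N_{12}$ respectively.

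For $r=6$ the equation becomes
\[
(2^{e-16}-1)P_5P_6=(2^e-1)(P_5+P_6-1),
\]
and I would show it has no admissible solutions. Computing $v_2(\vph^*(N))=e-1$ from $\vph^*(N)=(2^e-1)\cdot 2^{15}\cdot(P_5-1)(P_6-1)$ gives $v_2(P_5-1)+v_2(P_6-1)=e-16$, forcing $e\geq 18$. Since each of $3,5,17,257$ divides $N$ exactly once and contributes to $\vph^*(N)$ only through $(2^e-1)(P_5-1)(P_6-1)$, I would further deduce $v_p(2^e-1)\leq 1$ for $p\in\{3,5,17,257\}$, which rules out every $e$ divisible by $6$, $20$, $136$ or $4112$. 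Moreover, any prime factor of $2^e-1$ outside $\{3,5,17,257\}$ must coincide with $p_5$ or $p_6$, so there are at most two such primes and each must exceed $257$; by b) of Lemma \ref{lm11} the prime factors of every corresponding $q-1$ must then lie in $\{2,3,5,17,257,p_5,p_6\}$, a severe restriction.

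Combined with the upper bound on $e$ from Lemma \ref{lm23}, these restrictions cut $r=6$ down to a short list of admissible $e$. For each surviving value I would use Lemma \ref{lm22} to pin down $\{p_5,p_6\}$ through the primitive prime divisors of $2^e-1$, and Lemma \ref{lm29} to constrain the exponents $e_5,e_6$. A final check of the Diophantine in the Vieta-type form $(mP_5-K)(mP_6-K)=K(K-m)$, with $m=2^{e-16}-1$ and $K=2^e-1$, should show that no remaining $e$ yields a pair of odd prime powers. The hard part will be organising this enumeration cleanly so that every admissible $e$ in the bounded range is disposed of; each individual elimination is mechanical arithmetic, but the case split has to be assembled carefully.
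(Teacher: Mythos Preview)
Your plan is sound for $r=4$ and $r=5$, and in those cases it is cleaner than the paper's argument: solving $h(N)=2$ directly for $P_5=(2^e-1)/(2^{e-16}-1)$ and then invoking $\gcd(2^a-1,2^b-1)=2^{\gcd(a,b)}-1$ to get $e\in\{17,18,20,24,32\}$ is a nice shortcut that the paper does not use.

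The paper, however, organises the whole proof differently. Instead of splitting on $r$, it splits on the exponent $e$ of $2$: first $e=16$ gives $N=N_{10}$ immediately; for $e=17$ it uses Lemma~\ref{lm29} to pin down $p_5=131071$ and hence $N=N_{11}$; for $e\geq 18$ it bounds $65537\leq p_5\leq Q_5<174761$ with $p_5-1\mid 2^{e-16}\cdot 3\cdot 5\cdot 17\cdot 257$, which yields the short list $p_5\in\{65537,82241,87041,98689,131071,163841\}$. Each of these is then disposed of by a further application of Lemma~\ref{lm29} to $P_6$, except $p_5=65537$, which forces $e=32$ and $N=N_{12}$ (the case $e\geq 33$ is ruled out by noting that $2^e-1$ must divide $3\cdot 5\cdot 17\cdot 257\cdot 65537\cdot P_6$, leaving only $e=61$, $P_6=2^{61}-1$, and this fails the check $(P_6-1)\mid N$). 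In this way the paper never confronts the two-variable Diophantine equation you set up for $r=6$; the iterated use of Lemma~\ref{lm29} keeps the search one prime at a time and makes the enumeration short and explicit.

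Your $r=6$ outline is not wrong, but it is where the real work hides, and the constraints you list (the $2$-adic count, $v_p(2^e-1)\leq 1$ for $p\in\{3,5,17,257\}$, at most two extra prime factors of $2^e-1$, and the bound $e\leq 127$ from Lemma~\ref{lm23}) still leave a sizeable list of $e$ to walk through before the Vieta trick can finish each one. If you want to carry your organisation through, it will work, but you should expect the case analysis to be longer than the paper's; the paper's device of bounding $p_5$ first via Lemma~\ref{lm29} and then $P_6$ is what keeps its argument compact.
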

\begin{proof}
If $2^{16}\mid\mid N$, then $N=N_{10} M$ for some integer $M$ coprime to $N_{10}$.
Since $h(N_{10})=2$, we must have $M=1$ and $N=N_{10}$.

If $2^{17}\mid N$, then
$h(2^{e_0}\cdot 3\cdot 5\cdot 17\cdot 257)\leq 2(1-1/131071)$.

If $2^{17}\mid\mid N$, then
$65537\leq p_5\leq Q_5\leq 262143$
and $p_5-1$ must divide $2\cdot 3\cdot 5\cdot 17\cdot 257$.
The only possibility is $p_5=131071$.
If $e_5>1$, then $p_5<p_6\leq Q_5\leq 262143$ and
$p_6-1$ must divide $2\cdot 131071$, which is impossible.
Hence, we have $e_5=1$ and $N=N_{11} M$ for some integer $M$ coprime to $N_{11}$.
Since $h(N_{11})=2$, we must have $M=1$ and $N=N_{11}$.

If $2^{18}\mid N$, then $65537\leq p_5\leq Q_5<174761$
and $p_5-1$ must divide $2^{e-16}\cdot 3\cdot 5\cdot 17\cdot 257$.
Hence, $p_5$ must be $65537, 82241, 87041, 98689, 131071$ or $163841$.
If $p_5=163841$, then $P_6<200000$ and $P_6-1$ divides $2^e\cdot 3\cdot 17\cdot 257$, which is impossible.
If $p_5=131071$, then $P_6<262144$ and $P_6-1$ divides $2^e\cdot 131071$, which is impossible.
If $p_5=98689$, then $2^{24}\mid N$, $P_6<200000$ and $P_6-1$ divides $2^e\cdot 5\cdot 17\cdot 98689$, which is impossible.
If $p_5=87041$, then $2^{27}\mid N$, $P_6<266000$ and $P_6-1$ divides $2^e\cdot 3\cdot 257\cdot 87041$, which is impossible.
If $p_5=82241$, then $2^{23}\mid N$, $P_6<340000$ and $P_6-1$ divides $2^e\cdot 3\cdot 17\cdot 87041$.
Hence, we must have $P_6=328961$, which is impossible since $h(2^{24} P_1 P_2 P_3 P_4 P_5 P_6)>2>h(2^{25} P_1 P_2 P_3 P_4 P_5 P_6)$.

If $p_5=65537$, then $2^{32}\mid N$.
If $p_5=65537$ and $2^{33}\mid N$, then
$2^{32}\leq P_6<2^{33}-1$ and $2^e-1$ must divide $3\cdot 5\cdot 17\cdot 257\cdot 65537\cdot P_6$.
We must have $e=61$ and $P_6=2^{61}-1$.
However, this is inappropriate since $P_6-1$ does not divide $2^{61} \cdot 3\cdot 5\cdot 17\cdot 257\cdot 65537$.
Hence, $2^{33}$ cannot divide $N$ and $N$ must be $N_{12}$.
We note that, if $F_5=2^{32}+1$ were prime, then $e=64$ and $P_6=2^{32}+1$ would suffice.
\end{proof}

\begin{lem}\label{lmb10}
If $P_4\neq 257$, then we must have $P_4\in \{409, 641, 769, 1021\}$.
\end{lem}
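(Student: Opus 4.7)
The plan is to apply Lemma \ref{lm29} at $k=3$. Since the preceding lemmas establish $P_1=3$, $P_2=5$, $P_3=17$, we have $M_3=2^e\cdot 3\cdot 5\cdot 17$, and Lemma \ref{lm29} forces $p_4-1=2^a\cdot 3^b\cdot 5^c\cdot 17^d$ with $b,c,d\in\{0,1\}$. The constraint $h(M_4)\leq h(N)=2$ combined with $h(M_3)\geq 255/128$ yields $p_4^{e_4}\geq 256$, while the inequality $(p_4/(p_4-1))^{r-3}\geq h(N)/h(M_3)$ with $r\leq 6$ bounds $p_4$ from above (the bound depending on $e$). Together these leave only finitely many candidates.

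Explicit enumeration gives the list of primes $p>17$ with $p-1$ of the required form and $p$ within the bound: $31$, $41$, $61$, $97$, $103$, $137$, $193$, $241$, $257$, $409$, $641$, $769$, $1021$, $1361$. The value $P_4=257$ is handled in Lemma \ref{lmb9}, so I must exclude each of the remaining small values together with $1361$.

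For $p_4\in\{41,61,97,103,193,241\}$, Lemma \ref{lmb5} forces $e_4=1$. Since $p_4<256$, we have
\[
h(M_4)\geq \frac{255}{128}\cdot\frac{p_4}{p_4-1}>2,
\]
a contradiction. For $p_4\in\{31,137\}$, Lemma \ref{lmb5} allows $e_4\in\{1,2\}$. The case $e_4=1$ is ruled out as above, so $e_4=2$; here I would compute $h(M_4)=h(M_3)\cdot p_4^2/(p_4^2-1)$ explicitly, apply Lemma \ref{lm29} at $k=4$ to constrain $p_5-1$ to the form $2^a 3^b 5^c 17^d p_4^f$ and to bound $p_5$, and then enumerate the few remaining combinations of $e$, $p_5$ (and $p_6$ if $r=6$) to check that none satisfies $h(N)=2$. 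For $p_4=1361$, Lemma \ref{lmb5} gives $p_4\mid\mid N$; the bound on $p_5$ from Lemma \ref{lm29} combined with the divisibility $p_5-1\mid 2^e\cdot 3\cdot 5\cdot 17\cdot 1361$ leaves no admissible prime in the allowed interval above $1361$.

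The main obstacle is the case analysis for $p_4\in\{31,137\}$ with $e_4=2$, since the bounds on $p_5$ (and, when $r=6$, on $p_6$) are not very tight and a moderate number of candidate prime powers must be checked by direct computation. These verifications are routine in the style of Lemma \ref{lmb9}, so I expect the argument to go through without conceptual difficulty.
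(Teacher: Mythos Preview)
Your approach is correct and essentially identical to the paper's: enumerate the candidates for $p_4$ via Lemma~\ref{lm29}, rule out each $p_4<256$ with $e_4=1$ using $h(3\cdot5\cdot17\cdot p_4)>2$, reduce via Lemma~\ref{lmb5} to $P_4\in\{31^2,137^2\}$, and eliminate these two by a further appeal to Lemma~\ref{lm29}. The only differences are bookkeeping: the paper's candidate list stops at $1021$ (a slightly sharper use of the bound, together with the lower bound on $e$ forced by $v_2(P_i-1)$ for $i\le 3$, already excludes your extra candidate $1361$), and the paper dismisses $31^2$ and $137^2$ in a single line rather than via the enumeration you outline---note for instance that $137^2-1=2^4\cdot3\cdot17\cdot23$ immediately forces the forbidden prime $23$ into $N$.
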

\begin{proof}
By Lemma \ref{lm29}, we must have $p_4\in \{31, 41, 61, 97, 103, 137, 193, 241,\\
257, 409, 641, 769, 1021\}$.

If $p_4\leq 257$ and $P_4\neq 257$, then, we must have $e_4>1$ since $h(3\cdot 5\cdot 17\cdot 251)>2$.
It immediately follows from Lemma \ref{lmb5} that
$p_4=31$ or $137$ and $e_4=2$.
In view of Lemma \ref{lm29}, there exists no prime appropriate for $p_5$.

If $p_4>257$, then Lemma \ref{lmb5} immediately gives that $e_4=1$
and $P_4\in \{409, 641, 769, 1021\}$.
\end{proof}

\begin{lem}\label{lmb11}
We cannot have $P_1=3, P_2=5, P_3=17$ and $p_4>257$.
\end{lem}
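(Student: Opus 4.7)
The plan is to eliminate each of the four possibilities $P_4 \in \{409, 641, 769, 1021\}$ allowed by Lemma \ref{lmb10} in turn. In every case I will exploit the identity $h(N)=2$ in two complementary ways. First, equating $2$-adic valuations on both sides of $N = 2\vph^*(N)$ yields an exact relation $e = c_0 + \sum_{i\geq 5} v_2(P_i-1)$ with $c_0 \in \{11, 15, 16, 10\}$ respectively, giving a concrete lower bound on $e$. Second, since $h$ is multiplicative on coprime unitary factors and each $h(P_i) > 1$, one has $\prod_{i\geq 5} h(P_i) = 2/h(M_4)$.

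Next I will apply Lemma \ref{lm29} twice. The size inequality $(p_5/(p_5-1))^{r-4}\geq 2/h(M_4)$, combined with $r-4\leq 2$ from $\omega(N)\leq 7$, forces $p_5$ into a short interval just above $P_4$; a direct computation of $h(M_4)$ shows that in every case this interval is narrow enough to be tractable. Simultaneously the divisibility statement of Lemma \ref{lm29} forces $p_5-1$ to divide $M_4/\gcd(M_4,\vph^*(M_4))$, which in the four cases takes the explicit shapes $2^{e-10}\cdot 5\cdot 409$, $2^{e-14}\cdot 3\cdot 17\cdot 641$, $2^{e-15}\cdot 5\cdot 17\cdot 769$, and $2^{e-9}\cdot 1021$ respectively. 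For $P_4 = 769$ and $P_4 = 1021$ the size bound already falls below $P_4$, so no admissible $p_5$ exists and these two cases close at once.

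For $P_4 = 409$ the surviving $p_5$-candidates reduce to $641$ and $1637$, while for $P_4 = 641$ the sole candidate is $p_5 = 769$. In each, Lemma \ref{lmb5} forces $P_5 = p_5$, and I will then recompute $h(M_5)$. When $(P_4, p_5) = (409, 641)$, the ratio $(3\cdot 5\cdot 17\cdot 409\cdot 641)/(2\cdot 4\cdot 16\cdot 408\cdot 640)$ already exceeds $2$, so $h(M_5) > 2$, contradicting $h(N) = 2 \geq h(M_5)$. In the two remaining subcases $h(M_5) < 2$, and I will apply the same pair of constraints once more to rule out any admissible $p_6$: Lemma \ref{lm29} forces $p_6$ into an interval whose upper endpoint lies below $p_5$ or otherwise contains no prime $p$ for which $p-1$ has the required shape. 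The fallback $N = M_5$ is then inconsistent with $h(M_5) < 2 = h(N)$.

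The main obstacle will be the careful enumeration of $p_5$- and $p_6$-candidates: for each case one must list all divisors of an explicit $2^a\cdot D$ with $D$ a small squarefree odd integer, restricted to an interval of length a few hundred, and test primality of each $d+1$. A secondary subtlety is that the value of $e$ forced by the $2$-adic valuation argument must be large enough to accommodate the $v_2$ appearing in $p_5 - 1$ (and later in $p_6 - 1$), so that the divisibility condition from Lemma \ref{lm29} is actually compatible with the candidate and the enumeration is genuinely complete.
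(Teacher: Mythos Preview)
Your plan is correct and follows the same template as the paper: invoke Lemma~\ref{lmb10} to restrict $P_4$ to $\{409,641,769,1021\}$, use Lemma~\ref{lmb5} to force $e_4=1$, then apply the two parts of Lemma~\ref{lm29} (the size inequality and the divisibility $p_{k+1}-1\mid M_k/\gcd(M_k,\vph^*(M_k))$) to constrain $p_5$ and, if needed, $p_6$. Your $2$-adic bookkeeping $e=c_0+\sum_{i\geq 5}v_2(P_i-1)$ is just a sharpened form of Lemma~\ref{lm11}(d), and the values $c_0\in\{11,15,16,10\}$ and the quotients $2^{e-10}\cdot 5\cdot 409$, $2^{e-14}\cdot 3\cdot 17\cdot 641$, $2^{e-15}\cdot 5\cdot 17\cdot 769$, $2^{e-9}\cdot 1021$ are all correct.

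Where you differ from the paper is in thoroughness, and in fact you are \emph{more} careful. For $p_4=409$ the paper asserts that no prime $p_5$ with $409<p_5<1500$ satisfies $p_5-1\mid 2^e\cdot 5\cdot 409$, but $p_5=641$ does (since $640=2^7\cdot 5$), so the paper's argument has a gap there. You catch this candidate and dispose of it cleanly by the observation
\[
\frac{3\cdot 5\cdot 17\cdot 409\cdot 641}{2\cdot 4\cdot 16\cdot 408\cdot 640}=\frac{66853095}{33423360}>2,
\]
whence $h(M_5)>2$, contradicting $h(N)=2$. You also pick up the candidate $p_5=1637$ (with $1636=2^2\cdot 409$) and correctly eliminate it by showing the resulting bound on $p_6$ falls below $1637$. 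For $p_4=769$ and $p_4=1021$ your direct size argument (the bound on $p_5$ drops below $p_4$, after feeding back $e\geq c_0+(r-4)$) is a cleaner substitute for the paper's somewhat compressed treatment of the two together. So the route is the same, but your enumeration is the one that actually closes all cases.
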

\begin{proof}
By the previous lemma, $p_4=409, 641, 769$ or $1021$
and, by Lemma \ref{lmb5}, we must have $e_4=1$.
If $p_4\geq 769$, then, by Lemma \ref{lm29}, we must have $p_5\geq 1021$,
which is impossible since $2^{11}$ must divide $N$ and
$h(N)\leq h(2^{11}\cdot 3\cdot 5\cdot 17\cdot 769\cdot 1021\cdot 1031)<2$.
If $p_4=409$, then, $2^{13}$ must divide $N$ and,
by Lemma \ref{lm29}, we must have $409<p_5<1500$
and $p_5-1$ must divide $2^e\cdot 5\cdot 409$.
However, there exists no such prime.
If $p_4=641$, then, by Lemma \ref{lm29}, we must have $p_5=769$.
Observing that $2^{24}$ must divide $N$ and using Lemma \ref{lm29} again,
we find that $p_6<1000$ and $p_6-1$ divides $2^e\cdot 769$,
which is impossible.
Hence, no prime $>257$ is appropriate for $p_4$.
\end{proof}

Now the theorem immediately follows combining Lemmas \ref{lmb7}-\ref{lmb11}.

\section{The largest prime (power) factor of $N$}

In this section, we prove Theorem \ref{thm3}.
We write $P(n)$ for the largest prime divisor of $n$.

We begin by proving the former part of Theorem \ref{thm3}.
Firstly, we need to find all primes $p$ and integers $k$ such that $p<10^5$ and $P(p^k-1)<10^5$.
This work can be done using the method of Goto and Ohno \cite{GO},
who showed that an odd perfect number must have a prime factor $>10^8$.
For the use in the proof of the latter part of Theorem \ref{thm3},
we shall also determine all exponents $k$ for which $P(2^k-1)<10^8$.

\begin{lem}\label{lm51}
If $P(2^k-1)<10^5$, then $k\leq 16$ or $k\in\{18, 20, 21, 22, 24, 25,\\
26, 28, 29, 30, 32, 36, 40, 42, 44, 45, 48, 50, 52, 60, 84\}$.
Moreover, if $P(2^k-1)<10^8$, then $k\leq 30$ or
\[\begin{split}
k\in\{
& 32, 33, 34, 35, 36, 38, 39, 40, 42, 43, 44, 45, 46, 47, 48, 50, 51,\\
& 52, 53, 54, 55, 56, 57, 58, 60, 63, 64, 66, 68, 70, 72, 75, 76, 78,\\
& 81, 84, 90, 92, 96, 100, 102, 105, 108, 110, 132, 140, 156, 180, 210\}.
\end{split}\]
\end{lem}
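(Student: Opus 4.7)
The plan is to exploit the cyclotomic factorization $2^k-1=\prod_{d\mid k}\Phi_d(2)$, which yields
\[
P(2^k-1)=\max_{d\mid k}P(\Phi_d(2)),
\]
so that $P(2^k-1)<B$ is equivalent to $P(\Phi_d(2))<B$ for every divisor $d$ of $k$. The problem accordingly splits into (i) determining the set $\mathcal{D}_B=\{d:P(\Phi_d(2))<B\}$, and (ii) enumerating the positive integers $k$ whose every divisor lies in $\mathcal{D}_B$.

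For (i), recall that every prime factor $q$ of $\Phi_d(2)$ is either a primitive prime divisor of $2^d-1$, in which case Lemma \ref{lm22} forces $q\equiv 1\pmod d$ and hence $q\geq d+1$, or else an intrinsic prime, which must divide $d$ itself and occurs in $\Phi_d(2)$ with bounded multiplicity. In particular, if $d>B$ then $\Phi_d(2)$ has no primitive prime factor below $B$, so its only prime factors are intrinsic and $\Phi_d(2)$ is bounded polynomially in $d$; combined with the exponential lower bound $\Phi_d(2)\gg 2^{\phi(d)/2}$, this forces $\phi(d)=O(\log d)$, restricting $d$ to a finite explicit range. For the remaining finitely many $d$, one factors $\Phi_d(2)$ directly, using the factorizations catalogued in the Cunningham Project; the systematic enumeration follows the strategy of Goto and Ohno \cite{GO}. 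This yields the complete set $\mathcal{D}_B$ for $B=10^5$ and for $B=10^8$.

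For (ii), since $k$ divides $k$, $k\in\mathcal{D}_B$ is necessary, and the condition that every divisor of $k$ lie in $\mathcal{D}_B$ is downward-closed under divisibility. A short traversal over $\mathcal{D}_B$, checking which of its elements have all their divisors also in $\mathcal{D}_B$, produces exactly the two lists in the statement.

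The main obstacle is computational. For the bound $10^8$ one must certify, for $d$ up to roughly $210$, whether the largest prime factor of $\Phi_d(2)$ exceeds $10^8$. The larger values (where $\phi(d)$ reaches the forties and $\Phi_d(2)$ exceeds $2^{40}$) require nontrivial integer factorization via ECM or the special number field sieve to certify the last remaining cofactors. Granted these published factorizations, the proof reduces to a mechanical finite case analysis against the stated lists.
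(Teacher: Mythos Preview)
Your decomposition into determining $\mathcal{D}_B=\{d:P(\Phi_d(2))<B\}$ and then enumerating its divisor-closed elements is sound and matches the paper's implicit structure, but step~(i) has a real gap. Your explicit size argument bounds $d$ only by $d<B$: for $d>B$ the Zsigmondy primitive prime of $\Phi_d(2)$ already exceeds $d>B$, so $d\notin\mathcal{D}_B$. For $B=10^8$ this still leaves up to $10^8$ candidate values of $d$, for which you propose to ``factor $\Phi_d(2)$ directly''; but for $d$ in the millions $\Phi_d(2)$ has hundreds of thousands of digits and lies far outside any Cunningham table. Your closing remark that one only needs ``$d$ up to roughly $210$'' is the conclusion you want, not a step you have established.

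The paper supplies the missing reduction. Since the only primes $p<10^8$ with $p^2\mid \Phi_k(2)$ for some $k$ are the Wieferich primes $1093$ and $3511$ (Lehmer), every prime factor of $\Phi_k(2)$ below $10^8$ occurs to the first power apart from these two. Combined with $\Phi_k(2)\geq 3^{\varphi(k)/2}$ and the fact that all such primes are $\equiv 1\pmod{k}$, one obtains
\[
\frac{3^{\varphi(k)/2}}{1093\cdot 3511}\;\leq\;\prod_{\substack{q<10^8\\ q\equiv 1\ (\mathrm{mod}\ k)}} q\;<\;(10^8)^{10^8/k},
\]
so $k\varphi(k)$ is bounded by an explicit constant, forcing $k<144000$. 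A first computational sweep comparing $\sum_{q<10^8,\,q\equiv 1\,(\mathrm{mod}\,k)}\log q$ against $\tfrac12\varphi(k)\log 3$ then cuts this to $k\leq 500$, after which direct factorization of $2^k-1$ is genuinely feasible. This Wieferich-plus-product-in-progressions step is the idea you are missing; if it is what you intend by ``the strategy of Goto and Ohno'', it must be spelled out, since the only explicit bound in your text does not engage until $d>B$.
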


\begin{rem}
Recently, Stewart \cite{Ste} showed that, given two integers $b>a>1$, we have
$P(a^k-b^k)/k>\exp(\log k/(104\log\log k))$ for $k$ sufficiently large in terms of $\omega(ab)$.
We note that it is easy to show that the largest prime {\it power} divisor of $a^k-b^k$
is $>Ck\vph(k)\log a/\log (k\log a)$ for some absolute constant $C>0$.
\end{rem}

\begin{proof}
Here we shall prove the latter statement.
The latter yields the former after checking each exponent.

We may assume that $k>30$.
Let $\Phi_k(x)=\prod_\zeta (x-\zeta)$ denote the $k$-th cyclotomic polynomial,
where $\zeta$ runs over all primitive $k$-th roots of the unity.
Observing that each primitive $k$-th root of unity appears together with its conjugate
and $(2-\zeta)(2-\bar\zeta)\geq 3$, we have $\Phi_k(2)\geq 3^{\vph(k)/2}$ for any $k$.

By Lemma \ref{lm21}, any prime factor of $\Phi_k(2)$ must be congruent to $1\pmod{k}$.
Moreover, Lehmer\cite{Leh2} shows that if $p<10^8$ and $p^2\mid\Phi_k(2)$, then $p=1093$ or $p=3511$.
Hence, if $P(2^k-1)<10^8$, then the product of primes below $10^8$
congruent to $1\pmod{k}$ must be $\geq 3^{\vph(k)/2}/(1093\cdot 3511)$ and therefore
\begin{equation}
\frac{3^{\vph(k)/2}}{1093\cdot 3511}\leq \prod_{i<10^5/k} (ik+1)<(10^5)^{10^5/k}.
\end{equation}
This yields that
\begin{equation}\label{eq51}
k\varphi(k) \leq 2\cdot 10^8\cdot \log(10^8)/\log 3+\log(1093\cdot 3511).
\end{equation}
If $k\geq 510510$, then $\vph(k)\geq 92160$.  If $144000\leq k<510510$, then $\vph(k)/k\geq 5760/30030$.
In neither case, \eqref{eq51} can hold.

Now we show that $k\leq 500$.  This can be done by computing the product of primes below $10^8$
congruent to $1\pmod{k}$ for each $500<k\leq 144000$.
Indeed, we confirmed that, for each $500<k\leq 144000$,
\begin{equation}
\sum_{p\equiv 1\pmod{k}, p\mid \Phi_k(2)}p<\frac{\vph(k)\log 3}{2}-\log(1093\cdot 3511).
\end{equation}

Finally, for each $k\leq 500$, we factored $2^k-1$ and found all $k$'s satisfying $P(2^k-1)<10^8$,
which are given in the lemma.
\end{proof}

Nextly, we shall determined all prime powers $p^k$ with $3\leq p<10^5$ and $P(p^k-1)<10^5$.

\begin{lem}\label{lm52}
All prime powers $p^k$ such that $3\leq p<10^5$, $k\in \{7, 8, 9, 10, 12,\\ 20\}$ and $P(p^k-1)<10^5$
are given in Table \ref{tbl}.
If $P(3^k-1)<10^5$, then $k\leq 54$.
If $P(5^k-1)<10^5$, then $k\leq 30$.
Furthermore, if $7\leq p<10^5$, $k>10$, $k\neq 12, 20$ and $P(p^k-1)<10^5$, then
$(p, k)=(7, 14), (7, 24), (11, 21), (11, 24),\\ (13, 11), (67, 16)$.
\end{lem}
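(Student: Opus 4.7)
The plan is to mimic the cyclotomic argument from Lemma \ref{lm51}, replacing the base $2$ by a general prime $p \ge 3$. Factoring $p^k - 1 = \prod_{d \mid k}\Phi_d(p)$, the hypothesis $P(p^k - 1) < 10^5$ forces every prime divisor of $\Phi_k(p)$ to lie below $10^5$, and by Lemma \ref{lm21} each such prime either divides $k$ or is congruent to $1 \pmod{k}$. Pairing each primitive $k$-th root of unity with its conjugate gives the lower bound $\Phi_k(p) \ge (p-1)^{\varphi(k)}$, while the at most $10^5/k$ primes $q \equiv 1 \pmod{k}$ below $10^5$, together with a control on $v_q(\Phi_k(p))$ for $q \mid k$ via the lifting-the-exponent lemma, yield an upper bound of the shape
\begin{equation*}
\Phi_k(p) \le C_k \cdot (10^5)^{10^5/k}
\end{equation*}
with $C_k$ polynomially bounded in $k$.

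Combining the two estimates and taking logarithms produces an inequality of the form $\varphi(k)\log(p-1) \le (10^5/k)\log 10^5 + O(\log k)$, which for each fixed $p$ confines $k$ to an effectively computable finite range. Plugging in $p = 3$ gives the cutoff $k \le 54$, and $p = 5$ gives $k \le 30$; for $p \ge 7$ the geometric growth of $(p-1)^{\varphi(k)}$ is fast enough that the surviving window for $k > 10$ with $k \notin \{12, 20\}$ shrinks to a very short list of candidate $(p, k)$.

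Once $k$ is bounded, the rest is computational. For the first assertion, for each $k \in \{7, 8, 9, 10, 12, 20\}$ and each prime $3 \le p < 10^5$ I factor $p^k - 1$ using the decomposition into cyclotomic factors $\Phi_d(p)$ to reduce the work, and record the entries with $P(p^k - 1) < 10^5$ in Table \ref{tbl}. The statements for $p = 3$ and $p = 5$ follow by factoring $3^k - 1$ for $k \le 54$ and $5^k - 1$ for $k \le 30$ respectively and checking that no exponent beyond the stated cutoff survives. The last clause is obtained by running through the short list of pairs $(p, k)$ with $p \ge 7$, $k > 10$, $k \notin \{12, 20\}$ that survive the cyclotomic bound, which produces exactly the six exceptional pairs listed.

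The main obstacle will be sharpening the cyclotomic bound enough at small $p$ (especially $p = 3$, where $(p-1)^{\varphi(k)} = 2^{\varphi(k)}$ grows only as fast as the bound $3^{\varphi(k)/2}$ used in Lemma \ref{lm51}) so that the range of $k$ to factor stays manageable; this may require supplementing the pure $(p-1)^{\varphi(k)}$ estimate with a more careful average of $|p - \zeta|$ over primitive roots, and handling separately the primes $q$ dividing $k$ whose exponents in $\Phi_k(p)$ need the sharp bound $v_q(\Phi_k(p)) \le 1 + v_q(k)$. The remaining factorizations are routine: each $p^k - 1$ under consideration has at most around one hundred digits, and the algebraic splitting $p^k - 1 = \prod_{d \mid k} \Phi_d(p)$ keeps the individual factors small enough for direct treatment.
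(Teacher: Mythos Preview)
Your route differs from the paper's. The paper does not replay the cyclotomic‐bound argument of Lemma~\ref{lm51} with base $p$ in place of $2$; instead it cites the tables of Goto and Ohno \cite{GO}, which already list every pair $(p,r)$ with $r$ an odd prime, $p<10^8$ and $P(p^r-1)<10^8$. From that table the odd‐prime‐exponent cases are read off, and the paper then builds up to composite exponents bottom‐up: it checks $k=9,27,81$ and $k=8,16,32$ only over the short list of surviving bases, rules out $k=r_1r_2$ with $r_1,r_2\ge 5$, and finishes with $k=2^f r$ for the few odd $r$ that remain. Your scheme is top‐down (bound $k$ for each $p$, then factor everything in range), self‐contained, and organized base‐by‐base; the paper's proof piggybacks on an existing large computation and is organized exponent‐by‐exponent.

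There is, however, a genuine gap in your upper bound. The estimate $\Phi_k(p)\le C_k\,(10^5)^{10^5/k}$ with $C_k$ polynomial in $k$ tacitly assumes that each prime $q\equiv 1\pmod{k}$ below $10^5$ divides $\Phi_k(p)$ at most once. You control $v_q(\Phi_k(p))$ only for $q\mid k$, but for $q\nmid k$ with $\mathrm{ord}_q(p)=k$ one has $v_q(\Phi_k(p))=v_q(p^k-1)\ge 2$ exactly when $q$ is a Wieferich prime to base $p$, and nothing in your outline bounds that contribution. In Lemma~\ref{lm51} this is precisely what the appeal to Lehmer \cite{Leh2} (only $1093$ and $3511$ below $10^8$ for base $2$) supplies. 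To salvage your argument you must, for each base $p$ where the inequality is tight (certainly $p=3,5$, and likely a few more), compute the Wieferich primes $q<10^5$ to base $p$ and absorb their extra powers into $C_k$; for $p$ large enough that $(p-1)^{\varphi(k)}$ dominates comfortably this refinement is unnecessary. Note too that plugging $p=3$ into your inequality will not yield $k\le 54$ directly: as in Lemma~\ref{lm51} you will first get a crude bound in the thousands, then narrow it via a product‐of‐primes test before reaching the stated cutoff by actual factorization.
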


\begin{table}\label{tbl}
\caption{All prime powers $p^k$ such that $3\leq p<10^5$, $k\in \{7, 8, 9, 10, 12, 20\}$ and $P(p^k-1)<10^5$}\begin{center}
\begin{small}
\begin{tabularx}{\linewidth}{| c | X |}
 \hline
$k$ & $p$ \\
 \hline
$7$ & $2, 3, 5, 7, 11, 19, 59, 79, 269, 359$ \\
$8$ & $3, 5, 7, 11, 13, 17, 19, 31, 37, 41, 43, 47, 59, 67, 79, 83, 107, 127, 137, 149$,
$223, 227, 233, 239, 263, 269, 271, 359, 389, 401, 499, 563, 571, 587, 617, 773$,
$809, 823, 881, 971, 1061, 1091, 1201, 1213, 1319, 1487, 1579, 1637, 1657, 1669$,
$1783, 1907, 2351, 2383, 2399, 2677, 2741, 3109, 3163, 3373, 3631, 3847, 3851$,
$4877, 5167, 6451, 7237, 7699, 8081, 9239, 9397, 9733, 10099, 10181, 10691$,
$11483, 12721, 14051, 14149, 15427, 16067, 16607, 16987, 18979, 19531, 20129$,
$25253, 25633, 27073, 35837, 37783, 41893, 42391, 46327, 46889, 47041, 49253$,
$53831, 57173, 58013, 60101, 62497, 65951, 66541, 69457, 75931, 82241, 82261$,
$84229, 87721, 88339, 88819, 91499, 92333, 95917, 99523$ \\
$9$ & $2, 3, 5, 7, 19, 29, 31, 37, 43, 53, 379, 1019, 63599$ \\
$10$ & $3, 5, 7, 11, 13, 17, 19, 31, 53, 67, 113, 197, 421, 569$ \\
$12$ & $3, 5, 7, 11, 13, 17, 19, 23, 29, 41, 47, 53, 73, 79, 89, 97, 101, 103, 113$,
$137, 139, 197, 251, 271, 307, 367, 389, 397, 401, 421, 467, 479, 487, 907, 1013$,
$1319, 1451, 1627, 1697, 3083, 4027, 22051, 30977, 52889$ \\
$20$ & $3, 5, 7, 13, 17$ \\
 \hline
\end{tabularx}
\end{small}
\end{center}
\end{table}

\begin{proof}
Such prime powers $p^k$ with $k$ \textit{odd prime} can be taken from the table of
Goto and Ohno, who determined all such prime powers with $p<10^8$ and $P(p^k-1)<10^8$.
Their table shows that, if $p, r$ are odd primes such that $r>5, p<10^5$ and $P(p^r-1)<10^5$, then
$(p, r)=(13, 11)$, $p=3$ and $r=11$ or $17$ or
$r=7$ and $p=3, 5, 7, 11, 19, 59, 79, 269$ or $359$.
Furthermore, for any prime $p\geq 3$ and $r_1, r_2\geq 5$, we have $P(p^{r_1 r_2}-1)>10^5$.
We note that, there exist exactly $125$ odd primes $p<10^5$ such that $P(p^5-1)<10^5$.

We confirmed that $P(p^9-1)<10^5$ only for $p=3, 5, 7, 19, 29, 31, 37, 43, 53,\\ 379, 1019$ or $63599$,
among which, $P(p^{27}-1)<10^5$ only for $p=3, 5$ and $P(p^{81}-1)<10^5$ holds for no odd prime $p<10^5$.
Moreover, we confirmed that $P(p^8-1)$ for exactly $116$ odd primes $p<10^5$
and $P(p^{16}-1)$ only for $p=3, 5$ or $67$ among primes below $10^5$.
Moreover, $P(p^{32}-1)>10^5$ for any prime $p<10^5$.

If $p\geq 3, r\geq 7$ and $P(p^{3r}-1)<10^5$, then $(p, r)=(11, 7)$.
If $p$ is an odd prime and $P(p^{15}-1)<10^5$, then $p=3, 5$ and $P(p^{45}-1)>10^5$.
Now we know all prime powers $p^k$ with $k$ odd for which $P(p^k-1)<10^5$.

Finally, we checked each prime $p<10^5$ and exponent $k=2^f r$
with $f\leq 4$ and $r\geq 3$ prime or $r=9, 15, 21, 27$ such that $P(p^r-1)<10^5$.
We found that $p=3, r=14, 18, 24, 30, 54$, $p=5, r=14, 27, 30$, $p=7, r=14, 24$, $p=11, r=21, 24$,
$(p, r)=(13, 11), (17, 24), (67, 16)$ or $(p, r)$ must be in Table \ref{tbl}.
This completes the proof.
\end{proof}

Now our computer search starting from $2^k$ for each $k$ given in Lemma \ref{lm51},
yields that the eleven instances $N_1, N_2, \ldots, N_{10}$ and $N_{12}$
given in the introduction are all ones with $P(N)<10^5$.

Our algorithm can be explained in the following way:
\begin{enumerate}
\item Begin with $p_0=2, e_0=e, i=0, m_i=0$;
\item We factor $p_{m_i}^{e_{m_i}}-1$; For each prime factor $q$ of $p_{m_i}^{e_{m_i}}-1$,
we put $p_j=q$ for some $j$ with $p_j=0$ and relate $p_j$ to $p_{m_i}$ if there is no $j$ with $p_j=q$;
Moreover, we let $f_{m_i, j}$ be the exponent of $p_j$ dividing $p_{m_i}^{e_{m_i}}-1$;
\item For each unmarked $p_j$, we factor $p_j-1$ and mark $p_j$;  For each prime factor $q$ of $p_j-1$,
we put $p_h=q$ for some $h$ with $p_h=0$ and relate $p_h$ to $p_{m_i}$ if there is no $h$ with $p_h=q$;
Moreover, we let $f_{j, h}$ be the exponent of $p_h$ dividing $p_j-1$;
Until all $p_j$'s are marked, we repeat this step;
\item If there is some $j$ with $\sum_h f_{j, h}>e_j>0$, then jump to Step 4A;  Otherwise,
if there is some $j$ with $p_j>0$ and $e_j=0$, jump to Step 4B;  Otherwise jump to Step 4C;
\item[(4A)] We clear all $p_j$'s, all $f_{j, h}$'s for all prime $p_j$ related to $p_{m_i}$;
Jump to Step 5;
\item[(4B)] If there is some $j$ with $e_j=0$, then we set $m_{i+1}$ to be one of such $j$('s);
We set $e_{m_{i+1}}=1$;  We set $i=i+1$;  Jump to Step 2;
\item[(4C)] We output $N=\prod p_j^{e_j}$ where $j$ runs over all $j$ with $p_j\neq 0$;
Jump to Step 5;
\item[(5)] If $p_{m_i}>2$, then jump to Step 5A;  Otherwise, jump to Step 5B;
\item[(5A)] We set $e_{m_i}=e_{m_i}+1$;  If $p_{m_i}^{e_{m_i}}<10^8$, then jump to Step 2;  Otherwise,
We clear $p_{m_i}, e_{m_i}$;  We clear $m_i$;  We set $i=i-1$;  Jump to Step 2;
\item[(5B)] We set $e_{m_i}$ to the next member of the current value of $e_{m_i}$ in the set given in
Lemma \ref{lm52};  If there is no more member, then we terminate;
\end{enumerate}

We illustrate our proof in the case $e=32$.

If $2^{32}\mid\mid N$, then $3\cdot 5\cdot 17\cdot 257\cdot 65537=2^{32}-1$ divides $N$.
In particular, $3$ must divide $N$.
By Lemma \ref{lm52}, we must have $3^k\mid\mid N$ with $1\leq k\leq 12, 14\leq k\leq 18$ or $k\in\{20, 22, 24, 27, 28, 30, 34, 48, 54\}$.

If $k$ is even, then we must have $2^{33}\mid\vph^*(3^k\cdot 5\cdot 17\cdot 257\cdot 65537)\mid N$,
contrary to the assumption that $2^{32}\mid\mid N$.
If $3$ divides $k$, then $13\mid (3^3-1)\mid N$ and
$2^{33}\mid\vph^*(3\cdot 5\cdot 13\cdot 17\cdot 257\cdot 65537)\mid N$, a contradiction again.

If $5$ divides $k$, then $11^2\mid (3^5-1)\mid N$ and $5^2\mid \vph^*(2^{32}\cdot 11)\mid N$.
Hence, we must have $5^l\mid\mid N$ with $2\leq l\leq 10$, $l=12, 14, 15, 16, 18, 20$ or $30$.
If $l$ is even, then $2^{33}\mid\vph^*(3\cdot 5^2\cdot 17\cdot 257\cdot 65537)\mid N$,
a contradiction.  If $3$ divides $l$, then $31\mid (5^3-1)$ divides $N$ and
$2^{33}\mid \vph^*(3\cdot 5\cdot 11\cdot 17\cdot 31\cdot 257\cdot 65537)\mid N$,
a contradiction.  $l=5, 7$ lead to a similar contradiction and
$5$ can never divide $k$.
$p\mid k$ with $p=7, 11, 13, 17$ also lead to a similar contradiction
and we must have $k=1$.

Proceeding similarly, we must have $3\cdot 5\cdot 17\cdot 257\cdot 65537\mid\mid N$
and $N=2^{32}\cdot 5\cdot 17\cdot 257\cdot 65537\mid\mid N$.

Our procedure worked for the other exponents given in Lemma \ref{lm51}
and yielded only twelve instances given in the Introduction.
Some exponents required several hours.
This proves the former part of Theorem \ref{thm3}.

Now we shall prove that $N$ must be divisible by a prime power $>10^8$.
$e$ must belong to the set given in Lemma \ref{lm51}.
As in the previous section, we show that $2^e\mid\mid N$ cannot occur
unless $N$ is one of twelve instances given above
for each $e$ in this set.

For example, we show that $e\neq 210$ in the following way:

$2^{210}-1=3^2\cdot 7^2\cdot 11\cdot 31\cdot 43\cdot 71\cdot 127\cdot 151\cdot 211\cdot 281\cdot 331\cdot 337\cdot 5419\cdot 29191\cdot 86171\cdot 106681\cdot 122921\cdot 152041\cdot 664441\cdot 1564921$ must divide $N$.
Hence, $(3-1)(7-1)(11-1)\ldots(1564921-1)=2^{35}\cdot 3^{20}\cdot 5^{15}\cdot 7^{15}\cdot 11\cdot 23\cdot 43\cdot 113\cdot 127\cdot 139\cdot 181\cdot 439\cdot 1231$ must divide $N$.
Thus $3^{20}>10^8$ must divide $N$, which is contradiction.

Our procedure terminated for all exponents given in Lemma \ref{lm51}
and yielded only twelve instances given in the Introduction.
This completes the proof of Theorem \ref{thm3}.

\section{A product of consecutive primes}

The purpose of this section is to prove the Theorem \ref{thm5}.

In this section, $\sum^\prime$ denotes the sum over primes in a given range.
We need a Brun-Titchmarsh type theorem in the following form.

\begin{lem}\label{lm71}
During the statement and the proof of this lemma, let $c$ denote an effectively computable absolute constant which may take different value at each occurrence.
Moreover, let $Q_1=\log^{9/2} x$ and $Q=x^{1/2}/\log^{9/2} x$.
Then, we have
\begin{equation}
\sideset{}{^\prime}\sum_{q<Q} \max_{y\leq x} \abs{\pi(y; q, 1)-\frac{\pi(y)}{\vph(q)}}<\frac{cx}{\log^2 x}
\end{equation}
for $x>0$.
\end{lem}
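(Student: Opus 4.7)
\medskip
\noindent\textbf{Proof plan.} The statement is essentially the Bombieri--Vinogradov theorem specialized to prime moduli and to the residue class $a=1$. Since the sum over prime $q<Q$ is dominated by the analogous sum over all integers $q\leq Q$, it suffices to establish
\[
\sum_{q\leq Q}\max_{y\leq x}\abs{\pi(y;q,1)-\frac{\pi(y)}{\vph(q)}}\ll\frac{x}{\log^2 x}.
\]
The plan is to split this sum at $Q_1=\log^{9/2}x$ and treat the two ranges by different tools.

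For $q\leq Q_1$ I would invoke the Siegel--Walfisz theorem: for any fixed $A>0$,
\[
\pi(y;q,1)=\frac{\pi(y)}{\vph(q)}+O\!\left(y\exp(-c\sqrt{\log y})\right)
\]
uniformly in $q\leq(\log x)^A$. Summing trivially over the $\ll Q_1$ values of $q\leq Q_1$ adds only a polylogarithmic factor, which is swallowed by the exponential decay; the total contribution of this range is $O(x\exp(-c'\sqrt{\log x}))=o(x/\log^2 x)$.

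For $Q_1<q\leq Q$, the Bombieri--Vinogradov machinery takes over. One decomposes the indicator of primes through a Vaughan-type identity into type I and type II bilinear sums, uses Gallagher's lemma to replace $\max_{y\leq x}$ by a sum over primitive Dirichlet characters modulo $q$, and then applies the multiplicative large sieve inequality for character sums. The standard outcome is
\[
\sum_{q\leq x^{1/2}/\log^{B}x}\max_{y\leq x}\max_{(a,q)=1}\abs{\pi(y;q,a)-\frac{\pi(y)}{\vph(q)}}\ll\frac{x}{\log^A x}
\]
for every $A>0$, once $B=B(A)$ is taken large enough; the choice $A=2$ with $B=9/2$ is comfortably within the admissible range and delivers the lemma.

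The only real obstacle is tracking the explicit dependence $B(A)$ in the Bombieri--Vinogradov bound, which is a standard but somewhat technical calculation. Since this lemma is used only as a black box in the sieve argument for Theorem \ref{thm5}, the cleanest route is to quote it from a standard reference (for example, Davenport's \emph{Multiplicative Number Theory} or Iwaniec--Kowalski) and simply observe that the restriction to prime $q$ and to the fixed residue class $a=1$ is an immediate weakening.
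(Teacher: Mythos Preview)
Your argument is essentially correct as a proof of an \emph{ineffective} Bombieri--Vinogradov estimate, but it does not prove the lemma as stated. The lemma explicitly requires the constant $c$ to be \emph{effectively computable}, and this effectivity is carried through the whole of the proof of Theorem~\ref{thm5} (the constants there are also asserted to be effective). Your two black boxes --- Siegel--Walfisz for $q\le Q_1$ and the textbook Bombieri--Vinogradov theorem for $Q_1<q\le Q$ --- both carry ineffective implied constants, for exactly the same reason: the possible Siegel zero. Quoting Davenport or Iwaniec--Kowalski therefore proves a weaker statement than the one you are asked to establish.

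The paper avoids this by working entirely with explicit inputs. For $Q_1<q\le Q$ it invokes the Akbary--Hambrook variant of Bombieri--Vinogradov with explicit constants, dyadically decomposes, and obtains an effective $O(x/\log x)$ for the $\psi$-sum. For $q\le Q_1$ it uses an explicit formula of Chen--Wang for $\psi(x,\chi)$ together with Kadiri's result that there is at most one exceptional modulus $q_0\le Q_1$; the nonexceptional moduli are handled by the explicit zero-free region, and the single possible $q_0$ is controlled by the \emph{effective} (though weak) Siegel-zero bound $1-\beta\ge \pi/(0.4923\,q_0^{1/2}\log^2 q_0)$ of Liu--Wang. Since only one term is affected, this suffices for an effective $O(x/\log x)$ bound on the $\psi$-sum over all prime $q<Q$. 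A partial-summation passage from $\psi$ to $\pi$ (with a cutoff at $x_1=x/\log^2 x$ and a Brun--Titchmarsh estimate for the tail) then gains the extra logarithm and yields the claimed $O(x/\log^2 x)$. If you want to salvage your approach, you must replace Siegel--Walfisz by this kind of explicit treatment of the exceptional modulus, and replace the textbook Bombieri--Vinogradov by an explicit-constant version; otherwise the word ``effectively'' in the statement is not justified.
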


\begin{proof}
By Theorem 1.2 of \cite{AH}, we have
\begin{equation}
\begin{split}
& \sum_{q\leq Q}\frac{q}{\vph(q)}\sideset{}{^*}\sum_{\chi\pmod{q}} \max_{y\leq x}\abs{\psi(y, \chi)} \\
& <c_0(4x+2x^\frac{1}{2} Q^2+6x^\frac{2}{3} Q^\frac{3}{2}+5x^\frac{5}{6} Q)\log^\frac{7}{2} x,
\end{split}
\end{equation}
where $c_0=48.83236\cdots$ and $\sum^*_{\chi\pmod{q}}$ denotes
the sum over all primitive characters $\chi\pmod{q}$.

Dividing the sum into intervals of the form $(2^k Q_1, 2^{k+1} Q_1]$
$(k=0, 1, \ldots,\\ \floor{\log(Q/Q_1)/\log 2}$, we have
\begin{equation}
\begin{split}
& \sum_{Q_1<q\leq Q}\frac{1}{\vph(q)}\sideset{}{^*}\sum_{\chi\pmod{q}} \max_{y\leq x}\abs{\psi(y, \chi)} \\
& <c\left(\frac{x}{Q_1}+x^\frac{1}{2} Q+x^\frac{2}{3} Q^\frac{1}{2}+5x^\frac{5}{6}\log x\right)\log^\frac{7}{2} x.
\end{split}
\end{equation}
Limiting $q$ in the sum to primes, we obtain
\begin{equation}
\begin{split}
& \sideset{}{^\prime}\sum_{Q_1<q\leq Q} \max_{y\leq x}\abs{\psi(y; q, 1)-\frac{\psi(y)}{\vph(q)}-1} \\
& <c\left(\frac{x}{Q_1}+x^\frac{1}{2} Q+x^\frac{2}{3} Q^\frac{1}{2}+5x^\frac{5}{6}\log x\right)\log^\frac{7}{2} x
\end{split}
\end{equation}
and, recalling that $Q_1=\log^{9/2} x$ and $Q=x^{1/2}/\log^{9/2} x$,
\begin{equation}\label{eq71}
\sideset{}{^\prime}\sum_{Q_1<q\leq Q} \max_{y\leq x}\abs{\psi(y; q, 1)-\frac{\psi(y)}{\vph(q)}}
<c\frac{x}{\log x}.
\end{equation}

It is implicit in the proof of Theorem 4 of Chen and Wang \cite{ChW} that
for a Dirichlet character $\chi$ modulo $q\leq \log^{9/2} x$ and $x\geq \exp(\exp(9.7))$, we have
\begin{equation}\label{eq72}
\abs{\psi(x, \chi)-E_0 x}\leq \frac{0.022x}{\log^{7.5} x}+E_0 \log x+E_1\frac{x^\beta}{\beta},
\end{equation}
where $E_0=1$ if $\chi$ is principal and $E_0=0$ otherwise
and $\beta$ denotes a real zero of $L(s, \chi)$ greater than $1-0.1077/\log q$ and $E_1=1$ if it exists
and $E_1=0$ otherwise (For more general results, see the author's recent paper \cite{Ymd}).
Moreover, $E_1=1$ occurs for at most one character among all Dirichlet characters modulo $k$.

From Kadiri \cite{Kad}, we know that, there exists at most one modulus $q_0\leq Q_1$
such that a Dirichlet $L$-function $L(s, \chi)$ has a real zero $s=\beta>1-1/4.0904\log Q_1$ for some character $\chi\pmod{q_0}$.
We shall call the modulus $q_0$ exceptional if it exists and other moduli $\leq Q_1$ nonexceptional.
Now, it immediately follows from \eqref{eq72} that, if a prime $q\leq Q_1$ is nonexceptional, then 
\begin{equation}
\max_{y\leq x}\abs{\psi(y; q, 1)-\frac{\psi(y)}{\vph(q)}}<\frac{cx}{\log^{7.5} x}+2x^{1-1/4.0904\log Q_1}<\frac{c_1 x}{\log^{7.5} x}
\end{equation}
for another effectively computable constant $c_1$.
Using Theorem 3 of \cite{LW} stating that $\beta\leq 1-\pi/0.4923q_0^{1/2}\log^2 q_0$,
we have $x^\beta/q_0<cx(\log\log x)^4/\log^2 x$ and therefore
\begin{equation}
\max_{y\leq x}\abs{\psi(y; q_0, 1)-\frac{\psi(y)}{\vph(q_0)}}<\frac{cx}{\log^{7.5} x}+\frac{c^\prime x(\log\log x)^4}{\log^2 x}<\frac{c_2 x}{\log x}
\end{equation}
for some effectively computable constants $c^\prime$ and $c_2$.
Hence, we obtain
\begin{equation}
\sideset{}{^\prime}\sum_{q\leq Q_1} \max_{y\leq x}\abs{\psi(y; q, 1)-\frac{\psi(y)}{\vph(q)}}<\frac{cx}{\log x}.
\end{equation}
Combining with \eqref{eq71}, we have
\begin{equation}\label{eq71a}
\sideset{}{^\prime}\sum_{q<Q} \max_{y\leq x} \abs{\psi(y; q, 1)-\frac{\psi(y)}{\vph(q)}}<\frac{cx}{\log x}.
\end{equation}

Let
\begin{equation}
\Pi(y; q, a)=\sum_{n\leq y, n\equiv a\pmod{q}}\frac{\Lm(n)}{\log n}=\sum_{p^k\leq y, p^k\equiv a\pmod{q}}\frac{1}{k}
\end{equation}
and put $x_1=x/\log^2 x$.
Moreover, we write $E_f(y; q, a)$ for the error term $\abs{f(y; q, a)-f(y)/\vph(q)}$ for
arithmetic functions $f=\pi, \Pi, \theta, \psi$.
Using partial summation, we have
\begin{equation}\label{eq71b}
\begin{split}
E_\Pi(x, q, 1)-& E_\Pi(x_1, q, 1) \\
= & \frac{E_\psi(x, q, 1)}{\log x}-\frac{E_\psi(x_1, q, 1)}{\log x_1}-\int_{x_1}^x \frac{E_\psi(y, q, 1)}{y\log^2 y}dy \\
\leq & \frac{\abs{E_\psi(x, q, 1)}}{\log x}+\frac{\abs{E_\psi(x_1, q, 1)}}{\log x_1}+\frac{\max_{y\leq x}\abs{E_\psi(y, q, 1)}}{\log x_1} \\
\leq & \frac{2\max_{y\leq x}\abs{E_\psi(y, q, 1)}}{\log x_1}+\frac{cx_1}{\log x_1}.
\end{split}
\end{equation}
In Section 8 of \cite{AH}, it is shown that
\begin{equation}\label{eq71c}
\abs{E_\Pi(x_1; q, a)-E_\pi(x_1; q, a)}\leq 2x_1^\frac{1}{2}\leq 2x^\frac{1}{2}
\end{equation}
for $y\leq x$.

Finally, observing that $Q\leq x_1^{1/2}$, the Brun-Titchmarsh inequality given in \cite{MV} gives
\begin{equation}\label{eq71d}
\sideset{}{^\prime}\sum_{q\leq Q}\abs{E_{\pi}(x_1, q, 1)}\leq \sideset{}{^\prime}\sum_{q\leq Q}\frac{4x_1}{q\log x_1}\leq \frac{c_3 x_1\log\log x}{\log x_1}\leq \frac{cx\log\log x}{\log^3 x_1},
\end{equation}
where $c_3$ denotes some effectively computable constant.

Now the Lemma follows from \eqref{eq71a}, \eqref{eq71b}, \eqref{eq71c} and \eqref{eq71d}.
\end{proof}

We shall prove Theorem \ref{thm5}.
Let $0<\alpha_1, \alpha_2<1/2$ be two constants.
Hereafter, $c$ and $c^\prime$ denote some effectively computable constants
which may take different value at each occurrence and depend only on $\alpha_1$ and $\alpha_2$.
Assume that $\vph^*(N)$ divides $N$ and $N$ is divisible by exactly all primes below $x$.
Moreover, we may assume that $x$ is so large that $\abs{\pi(x)-x/\log x}<0.001x/\log x$.
For any prime $q$, $q^{\pi(x; q, 1)}\mid \prod_{p\leq x, p\equiv 1\pmod{q}} (p-1)\mid \vph^*(N)\mid N$.
Hence,
\begin{equation}
\sideset{}{^\prime}\prod_{q\leq x}\left(\frac{q^{\pi(x; q, 1)}}{q^{\pi(x; q, 1)}-1}\right)\geq \prod_{q^f\mid\mid N}\left(\frac{q^f}{q^f-1}\right)=h(N)\geq 2
\end{equation}
and therefore
\begin{equation}\label{eq70}
\begin{split}
\sideset{}{^\prime}\sum_{q\leq x}\frac{1}{q^{\pi(x; q, 1)}}
> & \sideset{}{^\prime}\sum_{q\leq x}\log\left(\frac{q^{\pi(x; q, 1)}}{q^{\pi(x; q, 1)}-1}\right)-\sideset{}{^\prime}\sum_{q\leq x}\frac{1}{q^{2\pi(x; q, 1)}} \\
> & \log 2-\frac{c}{\log x},
\end{split}
\end{equation}
where the last inequality follows observing that $q^{2\pi(x; q, 1)}>c\log^2 x$ for $q<\log x$ by \eqref{eq72}.

Let $U$ be the set of primes $q$ such that $\pi(x; q, 1)<0.001x/(q\log x)$.
We let
\begin{equation}
L=\sum_{p\leq x} \log (p-1)=\sum_{\substack{p, q\leq x,\\ p, q\text{: prime},\\ q\mid (p-1)}} \log q=\sideset{}{^\prime}\sum_{q<x/2} \pi(x; q, 1)\log q
\end{equation}
and divide $L$ into
\begin{equation}
L_1=\sideset{}{^\prime}\sum_{q\leq x^{1/2-\alpha_1}} \pi(x; q, 1)\log q,
\end{equation}
\begin{equation}
L_{2, 1}=\sum_{x^{1/2-\alpha_1}\leq q\leq x^{1-\alpha_2}, q\in U}\pi(x; q, 1)\log q,
\end{equation}
\begin{equation}
L_{2, 2}=\sum_{x^{1/2-\alpha_1}\leq q\leq x^{1-\alpha_2}, q\not\in U}\pi(x; q, 1)\log q
\end{equation}
and
\begin{equation}
L_3=\sideset{}{^\prime}\sum_{x^{1-\alpha_2}\leq q<x/2} \pi(x; q, 1)\log q.
\end{equation}

We can easily see that
\begin{equation}\label{eq731}
L=\sum_{p\leq x} \log (p-1)>\theta(x)-\sum_{p\leq x}\frac{2}{p}>0.999x.
\end{equation}

Since $x^{\alpha_1}>\log^5 x$, we can apply Lemma \ref{lm71} to obtain
\begin{equation}\label{eq732}
\begin{split}
L_1\leq & \pi(x)\sideset{}{^\prime}\sum_{q\leq Q} \frac{\log q}{q-1}+(\alpha_1\log x)\sideset{}{^\prime}\sum_{q\leq Q} \abs{\pi(x; q, 1)-\frac{\pi(x)}{q-1}} \\
\leq & 1.001\pi(x)\log Q+\frac{c\alpha_1 x}{\log x} \\
\leq & 0.501x.
\end{split}
\end{equation}

Using the inequality $\abs{\sum_{q<z}^\prime (\log q)/q-E}<1/2\log z$ with $E=-1.33258\cdots$ for $z\geq 319$ (see Theorem 6 of \cite{RS}), we have
\begin{equation}\label{eq733}
L_{2, 1}\leq \frac{0.001x}{\log x}\sideset{}{^\prime}\sum_{x^{1/2-\alpha_1}\leq q\leq x^{1-\alpha_2}}\frac{\log q}{q}\leq 0.001x
\end{equation}
for sufficiently large $x$.

Let $S(y, a)$ be the number of integers $q\leq y$ such that both $q$ and $aq+1$ are prime.
We observe that
\begin{equation}\label{eq75}
\sideset{}{^\prime}\sum_{x^{1-\alpha_2}\leq q<x/2} \pi(x; q, 1)=\sum_{\substack{x^{1-\alpha_2}\leq q<x/2, q\text{: prime},\\ p=aq+1\leq x, p\text{: prime}}}1
\leq \sum_{2\leq a\leq x^{\alpha_2}}S(x/a, a).
\end{equation}
Proceeding as in Subsection 2.3.2 of \cite{Gre} immediately yields that
\begin{equation}
S(x/a, a)<\frac{c\psi(a)(x/a)}{a\log^2 (x/a)}<\frac{c\psi(a)x}{(1-\alpha_2)^2 a^2 \log^2 x}
\end{equation}
for each $a$, where we note that $c$ does not depend on $a$ and $\psi(a)=a\prod_{p\mid a}(1+1/p)$
now denotes the Dedekind $\psi$-function, not the second Chebyshev function.
Thus the last sum in \eqref{eq75} is at most
\begin{equation}
\frac{cx}{(1-\alpha_2)^2 \log^2 x}\sum_{2\leq a\leq x^{\alpha_2}}\frac{\psi(a)}{a^2}
<\frac{(c+0.0001) c^\prime \alpha_2 x}{(1-\alpha_2)^2 \log x}
\end{equation}
and therefore
\begin{equation}\label{eq734}
L_3<\frac{(c+0.0001)c^\prime \alpha_2 x}{(1-\alpha_2)^2}.
\end{equation}

By \eqref{eq731}, \eqref{eq732}, \eqref{eq733} and \eqref{eq734},
there exists an absolute and effectively computable constant $\delta>0$ such that,
if $\alpha_2<\delta$, then
\begin{equation}\label{eq76}
L_{2, 2}>L-(L_1+L_{2, 1}+L_3)>0.497x
\end{equation}
for sufficiently large $x$.

On the other hand, using the Brun-Titchmarsh theorem again,
\begin{equation}
\begin{split}
L_{2, 2}< & \sum_{\substack{q\not\in U,\\ x^{1/2-\alpha_1}\leq q\leq x^{1-\alpha_2}}}\frac{2x\log q}{(q-1)\log (x/q)} \\
< & \frac{2.0001(1-\alpha_2)x}{\alpha_2}\sum_{\substack{q\not\in U,\\ x^{1/2-\alpha_1}\leq q\leq x^{1-\alpha_2}}}\frac{1}{q}
\end{split}
\end{equation}
and
\begin{equation}\label{eq77}
\begin{split}
\sum_{\substack{q\not\in U,\\ x^{1/2-\alpha_1}\leq q\leq x^{1-\alpha_2}}}\frac{1}{q}>\frac{0.248\alpha_2}{1-\alpha_2}.
\end{split}
\end{equation}

Since $\abs{E_\pi(x; q, 1)}>0.998x/(q\log x)$ for any prime $q<x^{1/2-\alpha_1}$ in $U$,
Lemma \ref{lm71} gives
\begin{equation}
\sum_{q\leq x^{1/2-\alpha_1}, q\in U}\frac{1}{q}<\frac{1.01\log x}{x}\sum_{q\leq x^{1/2-\alpha_1}, q\in U}E_\pi(x; q, 1)<\frac{c}{\log x}.
\end{equation}
Thus \eqref{eq70} gives
\begin{equation}
\begin{split}
& \sum_{\substack{q\in U,\\ x^{1/2-\alpha_1}\leq q\leq x^{1-\alpha_2}}}\frac{1}{q}\geq \sum_{\substack{q\in U,\\ x^{1/2-\alpha_1}\leq q\leq x^{1-\alpha_2}}}\frac{1}{q^{\pi(x; q, 1)}} \\
> & \log 2-\frac{c}{\log x}-\sideset{}{^\prime}\sum_{x^{1-\alpha_2}<q<x} \frac{1}{q}-\sum_{\substack{q\not\in U,\\ q\leq x^{1-\alpha_2}}}\frac{1}{q^{0.001x/(q\log x)}} \\
> & \log 2+\log(1-\alpha_2)-\frac{c^\prime}{\log x}.
\end{split}
\end{equation}
Hence,
\begin{equation}
\begin{split}
\sum_{\substack{q\not\in U,\\ x^{1/2-\alpha_1}\leq q\leq x^{1-\alpha_2}}}\frac{1}{q}
< & \log\frac{1-\alpha_2}{\frac{1}{2}-\alpha_1}+\frac{c}{\log x}-\sum_{\substack{q\in U,\\ x^{1/2-\alpha_1}\leq q\leq x^{1-\alpha_2}}}\frac{1}{q} \\
< & \log\frac{1-\alpha_2}{\frac{1}{2}-\alpha_1}-\log 2-\log (1-\alpha_2)+\frac{c}{\log x} \\
= & \log\frac{1}{1-2\alpha_1}+\frac{c}{\log x}.
\end{split}
\end{equation}
Now \eqref{eq77} implies
\begin{equation}
\frac{0.248\alpha_2}{1-\alpha_2}<\log\frac{1}{1-2\alpha_1}+\frac{c}{\log x}.
\end{equation}

Taking $0<\alpha_1<1$ so that 
\begin{equation}
\frac{(1-\alpha_2)x}{\alpha_2}\log\frac{1}{1-2\alpha_1}<0.248,
\end{equation}
we have $c/\log x>c^\prime$ for sufficiently large $x$, which implies $x<c$.
This proves Theorem \ref{thm5}.

\section{Acknowledgement}

The author is gratefully thankful for the referee's helpful comments and
informations on some literatures including \cite{FKL}.

{}
\vskip 12pt

\begin{thebibliography}{}
\bibitem{AH}
Amir Akbary and Kyle Hambrook,
A variant of the Bombieri-Vinogradov theorem with explicit constants and applications,
{\it Math. Comp.} \textbf{84} (2015), 1901--1932.
\bibitem{Ban}
A. S. Bang, Taltheoretiske Unders{\o}gelser,
{\it Tidsskrift Math.} \textbf{5 {IV}} (1886), 70--80 and 130--137.
\bibitem{ChW}
Chen, Jing-Run and Wang, Tian-Ze, On the odd Goldbach problem,
{\it Acta Math. Sinica} \textbf{39} (1996), 169--174 (in Chinese).
\bibitem{Coo}
R. J. Cook, Bounds for odd perfect numbers,
{\it Number Theory} (Ottawa, ON, 1996), CRM Proc. Lecture Notes \textbf{19} (1999), 67--71.
\bibitem{Dic}
L. E. Dickson, On the cyclotomic function, {\it Amer. Math. Monthly} \textbf{12} (1905), 86--89.
\bibitem{FJ}
Carrie E. Finch, Lenny Jones,
A curious connection between Fermat numbers and finite groups,
{\it Amer. Math. Monthly} \textbf{109} (2002), 517--524.
\bibitem{FKL}
Kevin Ford, Sergei Konyagin and Florian Luca,
On groups with perfect order subsets,
{\it Moscow J. Combin. Number Theory} \textbf{2} (4) (2012), 3--18.
\bibitem{GO}
T. Goto and Y. Ohno, Odd perfect numbers have a prime factor exceeding $10^8$, {\it Math. Comp.}, \textbf{77} (2008), 1859--1868. 
\bibitem{Gre}
George Greaves, {\it Sieves in Number Theory}, Springer-Verlag, Berlin and Heidelberg, 2001.
\bibitem{Kad}
Habiba Kadiri, An explicit zero-free region for the Dirichlet L-functions, 
\url{https://arxiv.org/abs/math/0510570}.
\bibitem{Ka1}
H.-J. Kanold, Untersuchungen {\"u}ber ungerade vollkommene Zahlen,
{\it J. Reine Angew. Math.} \textbf{183} (1941), 98--109.
\bibitem{Ka2}
H.-J. Kanold, S{\"a}tze {\"u}ber Kreisteilungspolynome und ihre Anwendungen auf einige zahlentheoretische Probleme, {I},
{\it J. Reine Angew. Math.} \textbf{187} (1950), 169--182.
\bibitem{Leh1}
D. H. Lehmer, On Euler's totient function, {\it Bull. Amer. Math. Soc.} \textbf{38} (1932), 745--751.
\bibitem{Leh2}
D. H. Lehmer, On Fermat's Quotient, Base Two, {\it Math. Comp.} \textbf{36} (1981), 289--290.
\bibitem{LW}
Ming-Chit Liu and Tianze Wang, Distribution of zeros of Dirichlet $L$-functions and an explicit formula
for $\psi(t, \chi)$, {\it Acta Arith.} \textbf{102} (2002), 261--293.
\bibitem{MV}
H. L. Montgomery and R. C. Vaughan, The large sieve, {\it Mathematika} \textbf{20} (1973), 119--134.
\bibitem{Nag}
T. Nagell, {\it Introduction to Number Theory}, Second edition, Chelsea, New York, 1964, reprinted in 2001.
\bibitem{Nie}
Pace P. Nielsen, An upper bound for odd perfect numbers,
{\it Integers} \textbf{3} (2003), A14.
\bibitem{RS}
J. B. Rosser and L. Schoenfeld, Approximate formulas for some functions of prime numbers,
{\it Illinois J. Math.} \textbf{6} (1962), 64--94.
\bibitem{Sie}
Wac\l aw. Sierpi\'{n}ski, 
{\it Teoria liczb, Cz\polhk{e}\'{s}\'{c} II} (in Polish),
Pa\'{n}stwowe Wydawnictwo Naukowe (now Wydawnictwo Naukowe PWN), Warszawa, 1959.
\bibitem{Ste}
Cameron L. Stewart, On divisors of Lucas and Lehmer numbers,
{\it Acta Math.} \textbf{211} (2013), 291--314.
\bibitem{Sub}
M. V. Subbarao, On a problem concerning the unitary totient function $\varphi^*(n)$, {\it Notices Amer. Math. Soc.} \textbf{18} (1971), 940.
\bibitem{SS}
M. V. Subbarao and V. Siva Rama Prasad, Some analogues of a Lehmer problem on the totient function, {\it Rocky Mountain J. Math.} \textbf{15} (1985), 609--620.
\bibitem{Ymd}
Tomohiro Yamada, Explicit formulae for primes in arithmetic progressions I,
preprint, \url{https://arxiv.org/abs/1306.5322}.
\bibitem{Zsi}
K. Zsigmondy, Zur Theorie der Potenzreste, {\it Monatsh. f{\"u}r Math.} \textbf{3} (1892), 265--284.
\end{thebibliography}
\end{document}